\documentclass{scrartcl}
\usepackage{amssymb}
\usepackage{amsfonts}
\usepackage{amsmath}
\usepackage{hyperref}
\usepackage{pgf}
\usepackage{tikz}
\usetikzlibrary{decorations.pathreplacing,patterns, hobby, shapes.geometric, 3d,calc, arrows}
\usepackage{subfigure}
\usepackage{nicefrac}
\newcommand{\bbbc}{\mathbb{C}}
\newcommand{\bbbn}{\mathbb{N}}
\newcommand{\bbbr}{\mathbb{R}}
\newcommand{\Idx}{\mathcal{I}}

\newcommand{\Kdx}{\mathcal{K}}
\newcommand{\ctI}{\mathcal{T}_{\Idx}}
\newcommand{\lfI}{\mathcal{L}_{\Idx}}
\newcommand{\ctIl}[1]{\mathcal{T}_{\Idx}^{(#1)}}

\newcommand{\ctII}{\mathcal{T}_{\Idx\times\Idx}}
\newcommand{\lfII}{\mathcal{L}_{\Idx\times\Idx}}
\newcommand{\lfiII}{\mathcal{L}_{\Idx\times\Idx}^-}
\newcommand{\lfaII}{\mathcal{L}_{\Idx\times\Idx}^+}
\newcommand{\dist}{\mathop{\operatorname{dist}}\nolimits}
\newcommand{\diam}{\mathop{\operatorname{diam}}\nolimits}
\newcommand{\sons}{\mathop{\operatorname{chil}\nolimits}}
\newcommand{\supp}{\mathop{\operatorname{supp}}\nolimits}
\newcommand{\level}{\mathop{\operatorname{level}}\nolimits}
\newcommand{\brow}{\mathop{\operatorname{row}}\nolimits}

\newcommand{\anc}{\mathop{\operatorname{anc}}\nolimits}
\newcommand{\sd}[1]{\mathop{\operatorname{sd}}_{#1}\nolimits}

\newcommand{\bi}{\mathbf{i}}
\newcommand{\pI}{p_{\Idx}}

%
%
\newcommand{\Cdi}{C_\text{di}}
\newcommand{\Clv}{C_\text{lv}}
\newcommand{\Csb}{C_\text{cp}}
\newcommand{\Csp}{C_\text{sp}}
\newcommand{\Cspt}{C_{\text{sp},t}}
\newcommand{\Csn}{C_\text{nc}}
\newcommand{\Cbp}{C_\text{cu}}
\newcommand{\Cbb}{C_\text{cl}}
\newcommand{\Cov}{C_\text{ov}}
\newcommand{\Clf}{C_\text{lf}}
\newcommand{\Crs}{C_\text{rs}}
\newcommand{\Cun}{C_\text{un}}

\newcommand{\Cbs}{C_\text{bs}}
\newcommand{\Ccs}{C_\text{cs}}
\newcommand{\Cqr}{C_\text{qr}}
\newcommand{\Csvd}{C_\text{svd}}
\newcommand{\Cbw}{C_\text{bw}}
\newcommand{\Cwe}{C_\text{we}}
\newcommand{\Ctr}{C_\text{tr}}
\newcommand{\Cpr}{C_\text{pr}}

\newtheorem{theorem}{Theorem}
\newtheorem{lemma}[theorem]{Lemma}
\newtheorem{definition}[theorem]{Definition}
\newtheorem{remark}[theorem]{Remark}

\newenvironment{proof}{\emph{Proof.}}{\hfill$\Box$}

\hyphenation{ap-pro-xi-ma-tion}

\allowdisplaybreaks

\title{Hybrid matrix compression for high-frequency problems}
\author{Steffen B\"orm and Christina B\"orst}
\date{\today}
\begin{document}
\maketitle

\begin{abstract}
Boundary element methods for the Helmholtz equation lead to large
dense matrices that can only be handled if efficient compression
techniques are used.
Directional compression techniques can reach good compression rates
even for high-frequency problems.

Currently there are two approaches to directional compression:
analytic methods approximate the kernel function, while algebraic
methods approximate submatrices.
Analytic methods are quite fast and proven to be robust, while
algebraic methods yield significantly better compression rates.

We present a hybrid method that combines the speed and reliability
of analytic methods with the good compression rates of algebraic
methods.
\end{abstract}

%
%
\section{Introduction}

We consider the Helmholtz single layer potential operator
\begin{equation*}
  \mathcal{G}[u](x) := \int_\Omega g(x,y) u(y) \,dy,
\end{equation*}
where $\Omega\subseteq\bbbr^3$ is a surface and
\begin{equation}\label{eq:helmholtz}
  g(x,y) = \frac{\exp(\bi \kappa \|x-y\|)}{4\pi \|x-y\|}
\end{equation}
denotes the Helmholtz kernel function with the
wave number $\kappa\in\bbbr_{\geq 0}$.

Applying a standard Galerkin discretization scheme with
a finite element basis $(\varphi_i)_{i\in\Idx}$ leads to the
stiffness matrix $G\in\bbbc^{\Idx\times\Idx}$ given by
\begin{align}\label{eq:matrix}
  g_{ij}
  &= \int_\Omega \varphi_i(x) \int_\Omega g(x,y) \varphi_j(y) \,dy\,dx &
  &\text{ for all } i,j\in\Idx,
\end{align}
where we assume that the basis functions are sufficiently smooth
to ensure that the integrals are well-defined even if the supports
overlap, e.g., for $i=j$.
Due to $g(x,y)\neq 0$ for all $x\neq y$, the matrix $G$ is not
sparse and therefore requires special handling if we want to
construct an efficient algorithm.

Standard techniques like fast multipole expansions \cite{RO85,GRRO87},
panel clustering \cite{HANO89,SA00}, or hierarchical matrices
\cite{HA99,HAKH00,GRHA02} rely on local low-rank approximations
of the matrix.
In the case of the high-frequency Helmholtz equation, e.g., if
the product of the wave number $\kappa$ and the mesh width $h$
is bounded, but not particularly small, these techniques
can no longer be applied since the local ranks become too large.
This situation frequently appears in engineering applications.

The \emph{fast multipole method} can be generalized to handle this
problem by employing a special expansion that leads to operators
that can be diagonalized, and therefore evaluated efficiently
\cite{RO93,GRHUROWA98}.

The \emph{butterfly method} (also known as multi-level matrix
decomposition algorithms, MLMDA) \cite{BOMI96} achieves a similar
goal by using permutations and block-diagonal transformations in
a pattern closely related to the fast Fourier transformation
algorithm.

\emph{Directional methods} \cite{BR91,ENYI07,MESCDA12,BEKUVE15}
take advantage of the fact that the Helmholtz kernel (\ref{eq:helmholtz})
can be written as a product of a plane wave and a function that
is smooth inside a conical domain.
Replacing this smooth function by a suitable approximation results
in fast summation schemes.

We will focus on directional methods, since they can be applied
in a more general setting than the fast multipole expansions based
on special functions, and since they offer the chance of achieving
better compression to $\mathcal{O}(n \log n)$ coefficients
compared to $\mathcal{O}(n \log^2 n)$ required by the
butterfly scheme \cite{BOMI96}.

In particular, we will work with \emph{directional $\mathcal{H}^2$-matrices}
(abbreviated $\mathcal{DH}^2$-ma\-tri\-ces), the algebraic counterparts of the
directional approximation schemes used in \cite{BR91,ENYI07,MESCDA12}.
Our starting point is the directional Chebyshev approximation scheme
introduced in \cite{MESCDA12} and analyzed in \cite{BOME15}.
While this approach is fast and proven to be reliable, the resulting
ranks are quite large, and this leads to unattractive storage
requirements.

We can solve this problem by applying an algebraic recompression
algorithm that starts with the $\mathcal{DH}^2$-matrix constructed
by interpolation and uses nested orthogonal projections and singular
value decompositions (SVD) to significantly reduce the rank.
This algorithm is based on the general $\mathcal{DH}^2$-matrix
compression algorithm introduced in \cite{BO15}, but takes advantage
of the previous approximation in order to significantly reduce the
computational work to $\mathcal{O}(n k^3 \log n)$ in
the high-frequency case, cf. Theorem~\ref{th:complexity} and
Remark~\ref{re:complexity}.

Compared to the closely related algorithm presented in \cite{MESCDA12},
our algorithm compresses the entire $\mathcal{DH}^2$-matrix structure
instead of just the coupling matrices, and the orthogonal projections
applied in the recompression algorithm allow us to obtain straightforward
estimates for the compression error.

Compared to the algorithm presented in \cite{BEKUVE15}, our approach
has better stability properties, owing to the results of \cite{BOME15}
for the interpolation scheme and the orthogonal projections employed
in \cite{BO15} for the recompression, and it can be expected to yield
better compression rates, since it uses an $\mathcal{H}^2$-matrix
representation for low-frequency clusters, while the algorithm of
\cite{BEKUVE15} relies on the slightly less efficient $\mathcal{H}$-matrices.

%
%
\section{\texorpdfstring{Directional $\mathcal{H}^2$-matrices}
                        {Directional H2-matrices}}

Hierarchical matrix methods are based on decompositions of the
matrix $G$ into submatrices that can be approximated by factorized
low-rank matrices.
In our case, we follow the directional interpolation technique
described in \cite{MESCDA12} and translate the resulting compressed
representation into an algebraical definition that can be applied
in more general situations.

In order to describe the decomposition into submatrices, we first
introduce a hierarchy of subsets of the index set $\Idx$ corresponding
to the \emph{box trees} used, e.g., in fast multipole methods.

%
%
\begin{definition}[Cluster tree]
\label{de:cluster}
Let $\mathcal{T}$ be a labeled tree such that the label $\hat t$
of each node $t\in\mathcal{T}$ is a subset of the index set $\Idx$.
We call $\mathcal{T}$ a \emph{cluster tree} for $\Idx$ if
\begin{itemize}
  \item the root $r\in\mathcal{T}$ is labeled $\hat r=\Idx$,
  \item the index sets of siblings are disjoint, i.e.,
    \begin{align*}
      t_1\neq t_2 &\Longrightarrow \hat t_1\cap\hat t_2=\emptyset &
      &\text{ for all } t\in\mathcal{T},\ t_1,t_2\in\sons(t),
         \text{ and}
    \end{align*}
  \item the index sets of a cluster's children are a partition of
    their parent's index set, i.e.,
    \begin{align*}
      \hat t &= \bigcup_{t'\in\sons(t)} \hat t' &
      &\text{ for all } t\in\mathcal{T}
          \text{ with } \sons(t)\neq\emptyset.
    \end{align*}
\end{itemize}
A cluster tree for $\Idx$ is usually denoted by $\ctI$.
Its nodes are called \emph{clusters}.
We denote the \emph{set of leaves} of $\ctI$ by
$\lfI := \{ t\in\ctI\ :\ \sons(t)=\emptyset \}$.
\end{definition}

A cluster tree $\ctI$ can be split into levels:
we let $\ctIl{0}$ be the set containing only the root of $\ctI$
and define
\begin{align*}
  \ctIl{\ell} &:= \{ t'\in\ctI\ :\ t'\in\sons(t) \text{ for a }
                       t\in\ctIl{\ell-1} \} &
  &\text{ for all } \ell\in\bbbn.
\end{align*}
For each cluster $t\in\ctI$, there is exactly one $\ell\in\bbbn_0$
such that $t\in\ctIl{\ell}$.
We call this the \emph{level number} of $t$ and denote it by $\level(t)=\ell$.
The maximal level
\begin{equation*}
  p_\Idx := \max\{ \level(t)\ :\ t\in\ctI \}
\end{equation*}
is called the \emph{depth} of the cluster tree.

Pairs of clusters $(t,s)$ correspond to subsets $\hat t\times\hat s$
of $\Idx\times\Idx$, i.e., to submatrices of $G\in\bbbc^{\Idx\times\Idx}$.
These pairs inherit the hierarchical structure provided by the
cluster tree.

In order to approximate $G|_{\hat t\times\hat s}$, the directional
interpolation approach uses axis-parallel \emph{bounding boxes}
$B_t,B_s\subseteq\bbbr^3$ such that
\begin{align*}
  \supp(\varphi_i) &\subseteq B_t, &
  \supp(\varphi_j) &\subseteq B_s &
  &\text{ for all } i\in\hat t,\ j\in\hat s,
\end{align*}
and constructs an approximation $\tilde g_{ts}$ of $g|_{B_t\times B_s}$.
Discretizing $\tilde g_{ts}$ then gives rise to an approximation of
the submatrix $G|_{\hat t\times\hat s}$.

For large wave numbers $\kappa$, the function $g|_{B_t\times B_s}$ cannot
be expected to be smooth, so we cannot apply interpolation directly.
This problem can be solved by \emph{directional} interpolation
\cite{BR91,ENYI07,MESCDA12}:
we choose a direction $c\in\bbbr^3$ and split the function $g$ into a
plane wave and a remainder term, i.e., we use
\begin{align*}
  g(x,y)
  &= \exp(\bi \kappa \langle x-y, c \rangle)
     \frac{\exp(\bi \kappa (\|x-y\| - \langle x-y, c \rangle))}
          {4\pi \|x-y\|}\\
  &= \exp(\bi \kappa \langle x-y, c \rangle) g_c(x,y),
\end{align*}
where the remainder is defined by
\begin{equation*}
  g_c(x,y) = \frac{\exp(\bi \kappa (\|x-y\| - \langle x-y, c \rangle))}
                  {4\pi \|x-y\|}.
\end{equation*}
This function is smooth \cite{BOME15} and can therefore be interpolated
by polynomials if the following \emph{admissibility conditions} hold:
\begin{subequations}\label{eq:admissibility}
\begin{align}
  \kappa \left\| \frac{m_t-m_s}{\|m_t-m_s\|} - c \right\|
  &\leq \frac{\eta_1}{\max\{\diam(B_t), \diam(B_s)\}},
    \label{eq:adm_directions}\\
  \max\{\diam(B_t), \diam(B_s)\} &\leq \eta_2 \dist(B_t, B_s),\label{eq:adm_standard}\\
  \kappa \max\{\diam(B_t)^2, \diam(B_s)^2\} &\leq \eta_2 \dist(B_t, B_s),\label{eq:adm_parabolic}
\end{align}
\end{subequations}
where $m_t\in B_t$ and $m_s\in B_s$ denote the midpoints of the boxes
and $\eta_1,\eta_2\in\bbbr_{>0}$ are chosen to strike a balance between
fast convergence (if both parameters are small) and low computational
cost (if both parameters are large).
Condition (\ref{eq:adm_directions}) ensures that $c$
is sufficiently close to the direction from the midpoint $m_s$ to the
midpoint $m_t$, condition (\ref{eq:adm_parabolic}) allows us to extend
this property to directions from any point $y\in B_s$ to any point $x\in B_t$
\cite[Lemma~3.9]{BOME15}, while condition (\ref{eq:adm_standard}) is
required to keep admissible blocks sufficiently far away from the
singularity at $x=y$.

Due to \cite[Corollary~3.14]{BOME15}, the interpolating polynomial
\begin{equation*}
  \tilde g_{c,ts}(x,y)
  = \sum_{\nu,\mu=1}^k \mathcal{L}_{t,\nu}(x)
                     g_c(\xi_{t,\nu}, \xi_{s,\mu})
                     \mathcal{L}_{s,\mu}(y)
\end{equation*}
converges exponentially to $g_c$ in $B_t\times B_s$, and
the error is bounded independently of the wavenumber $\kappa$.
Here $(\xi_{t,\nu})_{\nu=1}^k$ and $(\xi_{s,\mu})_{\mu=1}^k$ are families of
tensor interpolation points in $B_t$ and $B_s$, while
$(\mathcal{L}_{t,\nu})_{\nu=1}^k$ and $(\mathcal{L}_{s,\mu})_{\mu=1}^k$ are
the corresponding families of tensor Lagrange polynomials.

Multiplying by the plane wave, we obtain
\begin{align*}
  g(x,y)
  &= \exp(\bi \kappa \langle x-y, c \rangle) g_c(x,y)\\
  &\approx \exp(\bi \kappa \langle x-y, c \rangle)
     \sum_{\nu,\mu=1}^k \mathcal{L}_{t,\nu}(x)
                      g_c(\xi_{t,\nu}, \xi_{s,\mu})
                      \mathcal{L}_{s,\mu}(x)\\
  &= \sum_{\nu,\mu=1}^k
     \exp(\bi \kappa \langle x, c \rangle) \mathcal{L}_{t,\nu}(x)
     g_c(\xi_{t,\nu}, \xi_{s,\mu})
     \overline{
     \exp(\bi \kappa \langle y, c \rangle) \mathcal{L}_{s,\mu}(y)}\\
  &= \sum_{\nu,\mu=1}^k
     \mathcal{L}_{tc,\nu}(x)
     g_c(\xi_{t,\nu}, \xi_{s,\mu})
     \overline{\mathcal{L}_{sc,\mu}(y)} =: \tilde g_b(x,y)
   \qquad\text{ for all } x\in B_t,\ y\in B_s
\end{align*}
with the modified Lagrange polynomials
\begin{align*}
  \mathcal{L}_{tc,\nu}(x)
  &= \exp(\bi \kappa \langle x, c \rangle) \mathcal{L}_{t,\nu}(x), &
  \mathcal{L}_{sc,\mu}(y)
  &= \exp(\bi \kappa \langle y, c \rangle) \mathcal{L}_{s,\mu}(y).
\end{align*}
Replacing $g$ by $\tilde g_b$ in (\ref{eq:matrix}) yields
\begin{align*}
  g_{ij} &\approx \int_\Omega \varphi_i(x) \int_\Omega
              \tilde g_b(x,y) \varphi_j(y) \,dy\,dx\\
  &= \sum_{\nu=1}^k \sum_{\mu=1}^k
         \underbrace{g_c(\xi_{t,\nu}, \xi_{s,\mu})}_{=:s_{b,\nu\mu}}
         \underbrace{\int_\Omega \varphi_i(x)
                                \mathcal{L}_{tc,\nu}(x) \,dx}_{=:v_{tc,i\nu}}
         \underbrace{\int_\Omega \varphi_j(y)
                                \overline{\mathcal{L}_{sc,\mu}(y)} \,dy}_{
                     \overline{v_{sc,j\mu}}}\\
  &= \sum_{\nu=1}^k \sum_{\mu=1}^k s_{b,\nu\mu}
         v_{tc,i\nu} \overline{v_{sc,j\mu}}
   = (V_{tc} S_b V_{sc}^*)_{ij}
   \qquad\text{ for all } i\in\hat t,\ j\in\hat s
\end{align*}
with matrices $V_{tc}\in\bbbc^{\hat t\times k}$,
$V_{sc}\in\bbbc^{\hat s\times k}$, and $S_b\in\bbbc^{k\times k}$.
This is a factorized low-rank approximation
\begin{equation}\label{eq:matrix_apx}
  G|_{\hat t\times\hat s} \approx V_{tc} S_b V_{sc}^*
\end{equation}
of the submatrix $G|_{\hat t\times\hat s}$.

Since the matrix $G$ itself does not satisfy the conditions
(\ref{eq:admissibility}), we have to split it into submatrices, and
experiments show that the number of submatrices grows rapidly as the
problem size increases.
Storing the matrices $(V_{tc})_{t\in\ctI}$ for all clusters $t\in\ctI$
would lead to quadratic complexity and is therefore unattractive.
Fortunately, we can take advantage of the hierarchical structure of
the cluster tree if we organize the directions $c$ accordingly.

%
%
\begin{definition}[Directions]
\label{de:directions}
Let $(\mathcal{D}_\ell)_{\ell=0}^{\pI}$ be a family of finite subsets
of $\bbbr^3$.
It is called a \emph{family of directions} if
\begin{align*}
  \|c\|=1 &\vee c=0 &
  &\text{ for all } c\in\mathcal{D}_\ell,\ \ell\in[0:\pI].
\end{align*}
Here the special case $c=0$ is included to allow us to treat
the low-frequency case that does not require us to split off a
plane wave.
\end{definition}

Given a family of directions, we fix a family
$(\sd{\ell})_{\ell=0}^{\pI-1}$ of mappings
$\sd{\ell}:\mathcal{D}_\ell\to\mathcal{D}_{\ell+1}$ such that
\begin{align*}
  \|c - \sd{\ell}(c)\| &\leq \|c-\tilde c\| &
  &\text{ for all } c\in\mathcal{D}_\ell,\ \tilde c\in\mathcal{D}_{\ell+1},
                     \ \ell\in[0:\pI-1].
\end{align*}
Given a cluster tree $\ctI$, we write
\begin{align*}
  \mathcal{D}_t &:= \mathcal{D}_{\level(t)}, &
  \sd{t}(c) &:= \sd{\level(t)}(c) &
  &\text{ for all } t\in\ctI,\ c\in\mathcal{D}_{\level(t)}.
\end{align*}

In order to satisfy the admissibility condition
(\ref{eq:adm_directions}), we use sets of directions that are
sufficiently large to approximate \emph{any} direction, i.e., we require
\begin{align}\label{eq:all_directions}
  \min\{ \|y-c\|\ :\ c\in\mathcal{D}_t \}
  &\leq \frac{\eta_1}{\kappa \diam(B_t)} &
  &\text{ for all } t\in\ctI,\ y\in\bbbr^3 \text{ with } \|y\|=1.
\end{align}
Since the size of clusters decreases as the level grows, this
requirement means that large clusters will require more directions
than small clusters.

%
%
\begin{remark}[Construction of directions]
\label{re:construction_of_directions}
In our implementation, we construct sets of directions
satisfying (\ref{eq:all_directions}) as follows:
for a level $\ell\in[0:\pI]$, we compute the maximal diameter
$d_\ell$ of all bounding boxes $B_t$ associated
with clusters $t\in\ctIl{\ell}$ on this level.
If $\kappa d_\ell \leq \eta_1$ holds, we let $\mathcal{D}_\ell = \{ 0 \}$,
i.e., we use no directional approximation in the low-frequency case.

Otherwise, i.e., if $\kappa d_\ell > \eta_1$, we let
$m := \lceil \nicefrac{\sqrt{2} \kappa d_\ell}{\eta_1} \rceil$
and split each side of the unit cube $[0,1]^3$ into $m^2$ squares of width
$\nicefrac{2}{m}$ and diameter $\nicefrac{2 \sqrt{2}}{m}$.
Since the cube has six sides, we have a total of $6 m^2\in\mathcal{O}(\kappa^2
d_\ell^2)$ such squares.
We denote the centers of the squares by $\tilde c_\iota$, and their
projections to the unit sphere by
$c_\iota := \nicefrac{\tilde c_\iota}{\|\tilde c_\iota\|_2}$
for $\iota\in[1:6 m^2]$.
We let $\mathcal{D}_\ell := \{ c_\iota\ :\ \iota\in[1:6m^2] \}$.
For every unit vector $y\in\bbbr^3$, there is a point $\tilde y$ on the
surface of the unit cube with $y = \nicefrac{\tilde y}{\|\tilde y\|_2}$.
Since the surface grid is sufficiently fine, we can find
$\iota\in[1:6m^2]$
with $\|\tilde y-\tilde c_\iota\|_2 \leq \nicefrac{\sqrt{2}}{m}$,
and the projection ensures
\begin{align*}
  \|y-c_\iota\|_2
  &\leq \|\tilde y-\tilde c_\iota\|_2
   \leq \frac{\sqrt{2}}{m}
   \leq \frac{\eta_1}{\kappa d_\ell}
   \leq \frac{\eta_1}{\kappa \diam(B_t)} &
  &\text{ for all } t\in\ctIl{\ell},
\end{align*}
i.e., (\ref{eq:all_directions}) holds.
By this construction, the set $\mathcal{D}_\ell$ is sufficiently
large to contain approximations for \emph{any} unit vector, but still small
enough to satisfy the assumption (\ref{eq:directions_cluster}) required for
our complexity analyis.
\end{remark}

Due to (\ref{eq:all_directions}), we only have to satisfy
the conditions (\ref{eq:adm_standard}) and (\ref{eq:adm_parabolic}) and
can then find a direction $c_{ts}\in\mathcal{D}_t=\mathcal{D}_s$ that
satisfies the first condition (\ref{eq:adm_directions}):
for $t,s\in\ctIl{\ell}$, we let $c_{ts}\in\mathcal{D}_\ell$
be a best approximation of the direction from the midpoint $m_s$ of
the source box $B_s$ to midpoint $m_t$ of the target box $B_t$, i.e.,
\begin{align*}
  \left\| \frac{m_t-m_s}{\|m_t-m_s\|_2} - c_{ts} \right\|_2
  &\leq \left\| \frac{m_t-m_s}{\|m_t-m_s\|_2} - \tilde c \right\|_2 &
  &\text{ for all } \tilde c\in\mathcal{D}_\ell.
\end{align*}
If $m_t=m_s$, the admissibility condition (\ref{eq:adm_standard})
is violated and we can choose any $c_{ts}\in\mathcal{D}_\ell$.
This leaves us with the task of splitting the matrix $G$ into
submatrices $G|_{\hat t\times\hat s}$ such that $B_t$ and $B_s$ satisfy
the admissibility conditions (\ref{eq:adm_standard}) and
(\ref{eq:adm_parabolic}).
A decomposition with the minimal necessary number of submatrices
can be constructed by a recursive procedure that again gives rise
to a tree structure and inductively ensures $\level(t)=\level(s)$,
so that the directions $c_{ts}$ are well-defined.

%
%
\begin{definition}[Block tree]
\label{de:block}
Let $\ctI$ be a cluster tree for the index set $\Idx$ with root
$r_\Idx$,
let $(\mathcal{D}_\ell)_{\ell=0}^{\pI}$ be a family of
directions.

A tree $\mathcal{T}$ is called a \emph{block tree} for $\ctI$ if
\begin{itemize}
  \item for each $b\in\mathcal{T}$ there are $t,s\in\ctI$ such
        that $b=(t,s,c_{ts})$,
  \item the root $r\in\mathcal{T}$ satisfies $r=(r_\Idx,r_\Idx,c_{r_\Idx r_\Idx})$,
  \item for each $b=(t,s,c_{ts})\in\mathcal{T}$ we have
    \begin{equation}\label{eq:block_sons}
      \sons(b)\neq\emptyset \Longrightarrow
      \sons(b) = \{ (t',s',c_{t's'})\ :\ t'\in\sons(t),
                      \ s'\in\sons(s) \}.
    \end{equation}
\end{itemize}
A block tree for $\ctI$ is usually denoted by $\ctII$.
Its nodes are called \emph{blocks}.
We denote the set of leaves of $\ctII$ by
$\lfII := \{ b\in\ctII\ :\ \sons(b)=\emptyset \}$.
\end{definition}

The leaves of a block tree define a disjoint partition of
the index set $\Idx\times\Idx$, i.e., a decomposition of the
matrix $G\in\bbbc^{\Idx\times\Idx}$ into submatrices
$G|_{\hat t\times\hat s}$ with $(t,s,c)\in\lfII$.

We can construct a block tree with the minimal number of
blocks by a simple recursion:
starting with the root, we check whether a block is admissible.
If it is, we make it an admissible leaf and represent the
corresponding submatrix in the factorized form (\ref{eq:matrix_apx}).
Otherwise, we consider its children given by (\ref{eq:block_sons}).
If there are no children, i.e., if $\sons(t)$ or $\sons(s)$ are
empty, we have found an inadmissible leaf and store the submatrix
directly, i.e., as a two-dimensional array.

While the approximation (\ref{eq:matrix_apx}) reduces the amount
of storage required for one block to $k^2$, we still have to store
the matrices $(V_{tc})_{t\in\ctI, c\in\mathcal{D}_t}$, and in the
high-frequency case the storage requirements for these matrices
grow at least quadratically with the problem
size: if we have $\kappa^2 \sim n$, doubling the matrix
dimension means multiplying $\kappa$ by a factor of $\sqrt{2}$.
Constructing directions as in Remark~\ref{re:construction_of_directions},
the splitting parameter $m$ also grows by a factor of approximately
$\sqrt{2}$, and the number of directions is therefore doubled, as well.
On a given level $\ell\in[0:\pI]$, storing $V_{tc}$ for all $t\in\ctI$
with a \emph{fixed} direction $c$ requires $\mathcal{O}(nk)$ coefficients,
and since we have $\mathcal{O}(n)$ directions, we even end up with
$\mathcal{O}(n^2k)$ coefficients \emph{per level}.

In order to solve this problem, we take advantage of
the requirement (\ref{eq:all_directions}):
given a cluster $t\in\ctI$, a direction $c\in\mathcal{D}_t$, and one of
its children $t'\in\sons(t)$, we can find a direction
$c':=\sd{\ell}(c)\in\mathcal{D}_{t'}$ that approximates $c$
reasonably well.
This property allows us to reduce the storage requirements for
the matrices $(V_{tc})_{t\in\ctI,c\in\mathcal{D}_t}$ as follows:
since $\|c-c'\|_2$ is small, the function
\begin{equation*}
  x \mapsto \exp(-\bi \kappa \langle x, c' \rangle)
            \mathcal{L}_{tc,\nu}(x)
          = \exp(\bi \kappa \langle x, c-c' \rangle)
            \mathcal{L}_{t,\nu}(x)
\end{equation*}
is smooth and can therefore be interpolated in $B_{t'}$.
We find
\begin{align*}
  \mathcal{L}_{tc,\nu}(x)
  &= \exp(\bi \kappa \langle x, c \rangle) \mathcal{L}_{t,\nu}(x)
   = \exp(\bi \kappa \langle x, c' \rangle)
     \exp(\bi \kappa \langle x, c-c' \rangle) \mathcal{L}_{t,\nu}(x)\\
  &\approx \exp(\bi \kappa \langle x, c' \rangle)
     \sum_{\nu'=1}^k
        \underbrace{\exp(\bi \kappa \langle \xi_{t',\nu'}, c-c' \rangle)
                    \mathcal{L}_{t,\nu}(\xi_{t',\nu'})}_{=:e_{t'c,\nu'\nu}}
                    \mathcal{L}_{t',\nu'}(x)\\
  &= \sum_{\nu'=1}^k e_{t'c,\nu'\nu} \mathcal{L}_{t'c',\nu'}(x).
\end{align*}
This approach immediately yields
\begin{align*}
  v_{tc,i\nu}
  &= \int_\Omega \varphi_i(x) \mathcal{L}_{tc,\nu}(x) \,dx
   \approx \sum_{\nu'=1}^k e_{t'c,\nu'\nu}
             \int_\Omega \varphi_i(x) \mathcal{L}_{t'c',\nu'}(x) \,dx
   = (V_{t'c'} E_{t'c})_{i\nu}
\end{align*}
for all $i\in\hat t'$ and $\nu\in[1:k]$, which is equivalent
to
\begin{equation}\label{eq:nested_apx}
  V_{tc}|_{\hat t'\times k} \approx V_{t'c'} E_{t'c}.
\end{equation}
Instead of storing $V_{tc}$, we can just store small $k\times k$ matrices
$E_{t'c}$ for all $t'\in\sons(t)$, thus reducing the storage requirements
from $(\#\hat t) k$ to $\mathcal{O}(k^2)$.
This approach implies using the right-hand side of
(\ref{eq:nested_apx}) to \emph{define} the left-hand side.

%
%
\begin{definition}[Directional cluster basis]
Let $k\in\bbbn$, and let $V=(V_{tc})_{t\in\ctI,c\in\mathcal{D}_t}$ be a family
of matrices.
We call it a \emph{directional cluster basis} if
\begin{itemize}
  \item $V_{tc}\in\bbbc^{\hat t\times k}$ for all $t\in\ctI$
     and $c\in\mathcal{D}_t$, and
  \item there is a family
     $E=(E_{t'c})_{t\in\ctI,t'\in\sons(t),c\in\mathcal{D}_t}$
     such that
     \begin{align}\label{eq:nested}
       V_{tc}|_{\hat t'\times k} &= V_{t'c'} E_{t'c} &
       &\text{ for all } t\in\ctI,\ t'\in\sons(t),\ c\in\mathcal{D}_t,
                        \ c'=\sd{t}(c).
     \end{align}
\end{itemize}
The elements of the family $E$ are called \emph{transfer matrices} for
the directional cluster basis $V$, and $k$ is called its \emph{rank}.
\end{definition}

%
%
\begin{remark}[Notation]
The notation ``$E_{t'c}$'' for the transfer matrices (instead of something
like ``$E_{t'tc'c}$'' listing all parameters) for the matrices is justified
since the parent $t\in\ctI$ is uniquely determined by $t'\in\ctI$ due to
the tree structure, and the direction $c'=\sd{t}(c)$ is uniquely determined
by $c\in\mathcal{D}_t$ due to our Definition~\ref{de:directions}.
\end{remark}

We can now define the class of matrices that is the subject of this
article:
since the leaves $\lfII$ of the block tree correspond to a partition
of the matrix $G$, we have to represent each of the submatrices
$G|_{\hat t\times\hat s}$ for $b=(t,s,c)\in\lfII$.
Those blocks that satisfy the admissibility conditions
(\ref{eq:admissibility}) can be approximated in the form
(\ref{eq:matrix_apx}).
These matrices are called \emph{admissible} and collected in a
subset
\begin{align*}
  \lfaII &:= \{ b\in\lfII\ :\ b \text{ is admissible} \}.\\
\intertext{The remaining blocks are called \emph{inadmissible} and
collected in the set}
  \lfiII &:= \lfII \setminus \lfaII.
\end{align*}
These matrices are stored as simple two-dimensional arrays without
any compression.

%
%
\begin{definition}[Directional $\mathcal{H}^2$-matrix]
\label{de:dh2matrix}
Let $V$ and $W$ be directional cluster bases for $\ctI$.
Let $G\in\bbbc^{\Idx\times\Idx}$ be a matrix.
We call it a \emph{directional $\mathcal{H}^2$-matrix} (or just a
\emph{$\mathcal{DH}^2$-matrix}) if there are families
$S=(S_b)_{b\in\lfaII}$ such that
\begin{align}\label{eq:vsw}
  G|_{\hat t\times\hat s} &= V_{tc} S_b W_{sc}^* &
  &\text{ for all } b=(t,s,c)\in\lfaII.
\end{align}
The elements of the family $S$ are called \emph{coupling matrices}.
$V$ is called the \emph{row cluster basis} and $W$ is called the
\emph{column cluster basis}.

A \emph{$\mathcal{DH}^2$-matrix representation} of a
$\mathcal{DH}^2$-matrix $G$ consists of $V$, $W$, $S$ and the
family $(G|_{\hat b})_{b\in\lfiII}$ of \emph{nearfield matrices} corresponding
to the inadmissible leaves of $\ctII$.
\end{definition}

Under typical assumptions, including that $k$ is fixed
independently of $\kappa$, since the interpolation error does not
depend on $\kappa$, it is possible to prove that a
$\mathcal{DH}^2$-matrix requires only
$\mathcal{O}(n k + \kappa^2 k^2 \log(n))$ units of storage
\cite[Section~5]{BO15}.

\section{Recompression}


\subsection{Compression of general matrices}
\label{se:compression_general}

Before we address the recompression of a $\mathcal{DH}^2$-matrix,
we briefly recall the compression algorithm for general matrices
described in \cite{BO15}.

Let $G\in\bbbc^{\Idx\times\Idx}$.
We want to approximate the matrix by an \emph{orthogonal projection},
since this guarantees optimal stability and best-approximation properties
with respect to certain norms.

We call a matrix $X\in\bbbc^{\Idx\times\Kdx}$ \emph{isometric} if
$X^* X = I$ holds.
If $X$ is isometric, $X X^*$ is the orthogonal projection into the
range of $X$, i.e., it maps a vector $y\in\bbbc^\Idx$ onto its best
approximation $\widetilde{y} := X X^* y$ in this space, and the stability
estimate $\|\widetilde{y}\|_2 \leq \|y\|_2$ holds.

We call the cluster bases $(V_{tc})_{t\in\ctI,c\in\mathcal{D}_t}$ and
$(W_{tc})_{t\in\ctI,c\in\mathcal{D}_t}$ \emph{orthogonal} if all matrices
are isometric, i.e., if
\begin{align*}
  V_{tc}^* V_{tc} &= I, &
  W_{tc}^* W_{tc} &= I &
  &\text{ holds for all } t\in\ctI,\ c\in\mathcal{D}_t.
\end{align*}
In this case, the optimal coupling matrices with respect to the
Frobenius norm (and almost optimal with respect to the spectral norm)
can be computed by orthogonal projection using
\begin{equation*}
  G|_{\hat t\times\hat s}
  \approx V_{tc} V_{tc}^* G|_{\hat t\times\hat s} W_{sc} W_{sc}^*
  = V_{tc} S_b W_{tc}^*
  \qquad\text{ with }\qquad
  S_b := V_{tc}^* G|_{\hat t\times\hat s} W_{sc}.
\end{equation*}
Due to
\begin{align*}
  \|G|_{\hat t\times\hat s}
    - V_{tc} S_b W_{sc}^*\|_F^2
  &= \|G|_{\hat t\times\hat s} - V_{tc} V_{tc}^* G|_{\hat t\times\hat s}\|_F^2
   + \|V_{tc} V_{tc}^* (G|_{\hat t\times\hat s}
                      - G|_{\hat t\times\hat s} W_{sc} W_{sc}^*) \|_F^2\\
  &\leq \|G|_{\hat t\times\hat s}
           - V_{tc} V_{tc}^* G|_{\hat t\times\hat s}\|_F^2
      + \|G|_{\hat t\times\hat s}^*
           - W_{sc} W_{sc}^* G|_{\hat t\times\hat s}^*\|_F^2,
\end{align*}
we can focus on the construction of a good row cluster basis, since
a good column cluster basis can be obtained by applying the same
procedure to the adjoint matrix.

By Definition~\ref{de:dh2matrix}, the matrix $V_{tc}$ has to be able
to approximate the range of all matrices $G|_{\hat t\times\hat s}$
with $(t,s,c)\in\lfaII$.
We collect the corresponding column clusters in the set
\begin{equation*}
  \brow(t,c) := \{ s\in\ctI\ :\ (t,s,c)\in\lfaII \}.
\end{equation*}
We also have to take the nested structure of the cluster basis
into account.
Let $t\in\ctI$ with $\sons(t)\neq\emptyset$.
For the sake of simplicity, we focus on the case $\#\sons(t)=2$ and
$\sons(t)=\{t_1,t_2\}$.
Assume that isometric matrices $V_{t_1 c_1}$ and $V_{t_2 c_2}$ with
$c_1=\sd{t_1}(c)$ and $c_2=\sd{t_2}(c)$ have already been computed.
Due to (\ref{eq:nested}), we have
\begin{equation}\label{eq:V_Vhat}
  V_{tc}
  = \begin{pmatrix}
      V_{t_1 c_1} & \\
      & V_{t_2 c_2}
    \end{pmatrix}
    \widehat{V}_{tc}
  \qquad\text{ with }\qquad
  \widehat{V}_{tc}
  := \begin{pmatrix}
       E_{t_1 c}\\ E_{t_2 c}
     \end{pmatrix}.
\end{equation}
The error of the orthogonal projection takes the form
\begin{equation*}
  \|G|_{\hat t\times\hat s} - V_{tc} V_{tc}^* G|_{\hat t\times\hat s}\|_F^2
  = \left\|G|_{\hat t\times\hat s}
            - \begin{pmatrix}
                V_{t_1 c_1} & \\
                & V_{t_2 c_2}
              \end{pmatrix} \widehat{V}_{tc}
              \widehat{V}_{tc}^*
              \begin{pmatrix}
                V_{t_1 c_1}^* & \\
                & V_{t_2 c_2}^*
              \end{pmatrix} G|_{\hat t\times\hat s}\right\|_F^2.
\end{equation*}
Since $V_{t_1 c_1}$ and $V_{t_2 c_2}$ are assumed to be isometric,
Pythagoras' identity yields
\begin{align}
  \|G|_{\hat t\times\hat s} - V_{tc} V_{tc}^* G|_{\hat t\times\hat s}\|_F^2
  &= \left\|G|_{\hat t\times\hat s}
            - \begin{pmatrix}
                V_{t_1 c_1} & \\
                & V_{t_2 c_2}
              \end{pmatrix}
              \begin{pmatrix}
                V_{t_1 c_1}^* & \\
                & V_{t_2 c_2}^*
              \end{pmatrix} G|_{\hat t\times\hat s}\right\|_F^2\notag\\
  &\quad + \left\| \begin{pmatrix}
               V_{t_1 c_1} & \\
               & V_{t_2 c_2}
             \end{pmatrix}
             ( I - \widehat{V}_{tc} \widehat{V}_{tc}^* )
             \begin{pmatrix}
               V_{t_1 c_1}^* & \\
               & V_{t_2 c_2}^*
             \end{pmatrix} G|_{\hat t\times\hat s}\right\|_F^2\notag\\
  &= \|G|_{\hat t_1\times s}
       - V_{t_1 c_1} V_{t_1 c_1}^* G|_{\hat t_1\times s}\|_F^2\notag\\
  &\quad + \|G|_{\hat t_2\times s}
       - V_{t_2 c_2} V_{t_2 c_2}^* G|_{\hat t_2\times s}\|_F^2\notag\\
  &\quad + \left\|(I - \widehat{V}_{tc} \widehat{V}_{tc}^*)
                  \begin{pmatrix}
                    V_{t_1 c_1}^* G|_{\hat t_1\times s}\\
                    V_{t_2 c_2}^* G|_{\hat t_2\times s}
                  \end{pmatrix} \right\|_F^2.\label{eq:error_sons}
\end{align}
We can see that the projection error for the cluster $t$ depends
on the projection errors for its children $t_1$ and $t_2$.
Using a straightforward induction, we find that all descendants
of $t$ contribute to the error.

This means that our algorithm has to take all ancestors of a
cluster $t$ into account when it constructs $V_{tc}$.
We collect these ancestors and the corresponding directions in
the sets
\begin{align}\label{eq:ancestors}
  \anc(t,c)
  &:= \begin{cases}
        \{ (t,c) \} &\text{ if } t \text{ is the root of } \ctI,\\
        \{ (t,c) \}
        \cup \bigcup_{c^+\in\sd{t^+}^{-1}(\{c\})}
             \anc(t^+,c^+)
        &\text{ if } t \text{ is a child of } t^+\in\ctI
      \end{cases}
\end{align}
for all $t\in\ctI$ and $c\in\mathcal{D}_t$.
The mapping $\sd{t^+}$ is not invertible, since the parent cluster
frequently is associated with more directions than its child.
This is the reason there can be multiple $c^+\in\mathcal{D}_{t^+}$
with $\sd{t^+}(c^+)=c$ in (\ref{eq:ancestors}).
We have to find $V_{tc}$ such that
\begin{align}\label{eq:approx_submatrices}
  G|_{\hat t\times\hat s}
  &\approx V_{tc} V_{tc}^* G|_{\hat t\times\hat s} &
  &\text{ for all } s\in\brow(\tilde t,\tilde c),
                    \ (\tilde t,\tilde c)\in\anc(t,c).
\end{align}
Due to Definition~\ref{de:block}, the index sets of the clusters in
\begin{equation*}
  \brow^+(t,c) := \bigcup_{(\tilde t,\tilde c)\in\anc(t,c)}
                    \brow(\tilde t,\tilde c)
\end{equation*}
are disjoint, and we can introduce
\begin{align*}
  \mathcal{R}_{tc} &:= \bigcup_{s\in\brow^+(t,c)} \hat s, &
  G_{tc} &:= G|_{\hat t\times\mathcal{R}_{tc}} &
  &\text{ for all } t\in\ctI,\ c\in\mathcal{D}_t
\end{align*}
in order to rewrite (\ref{eq:approx_submatrices}) in the form
\begin{equation*}
  G_{tc} \approx V_{tc} V_{tc}^* G_{tc}.
\end{equation*}
If $t$ is a leaf cluster, we can directly find the optimal
approximation by computing the SVD of $G_{tc}$
and using the first $k$ left singular vectors as the columns of
the matrix $V_{tc}$:
the SVD yields an orthonormal basis $(v_i)_{i=1}^\tau$ of left singular
vectors, an orthonormal basis $(u_i)_{i=1}^\tau$ of right singular vectors,
and ordered singular values $\sigma_{1} \geq \dots \geq \sigma_\tau \geq 0$
with
\begin{align*}
  G_{tc} &= \sum_{i=1}^\tau v_i \sigma_i u_i^*,
\end{align*}
where $\tau = \#\hat t$.
Using this notation, it is an easy task to find the lowest rank
$k\in[0:\tau]$ such that the approximation
\begin{align*}
  \widetilde{G}_{tc} &:= \sum_{i=1}^k v_i \sigma_i u_i^*
  = V_{tc} V_{tc}^* G_{tc}, &
  V_{tc} &:= \begin{pmatrix} v_1 & \ldots & v_k \end{pmatrix},
\end{align*}
still ensures the desired error bound \cite[Lemma~5.19]{BO10}.
For the sake of simplicity, we consider only the Frobenius norm case,
the spectral norm and relative errors bounds are available with slight
adaptations in the choice of $k$ \cite[Theorem~2.5.3]{GOVL96}.

If $t$ is not a leaf cluster, (\ref{eq:error_sons}) indicates that
we have to look for $\widehat{V}_{tc}$ such that
\begin{equation*}
  \widehat{G}_{tc} \approx \widehat{V}_{tc} \widehat{V}_{tc}^* \widehat{G}_{tc}
\end{equation*}
with the matrix
\begin{equation}\label{eq:Ghat}
  \widehat{G}_{tc}
  := \begin{pmatrix}
       V_{t_1 c_1}^* & \\
       & V_{t_2 c_2}^*
     \end{pmatrix} G_{tc}
\end{equation}
containing the coefficients for the approximation of $G_{tc}$ in the
children's bases.
This task can again be solved by computing the SVD of $\widehat{G}_{tc}$,
which takes only $\mathcal{O}(k^2 \#\mathcal{R}_{tc})$ operations, and
the transfer matrices $E_{t_1 c}$ and $E_{t_2 c}$ can be
obtained from $\widehat{V}_{tc}$ by definition (\ref{eq:V_Vhat}).


\subsection{\texorpdfstring{$\mathcal{DH}^2$-recompression}
                           {DH2-recompression}}

The algorithm presented in the previous section has quadratic
complexity if we assume that the numerical ranks $k$ are
uniformly bounded, cf. \cite[Theorem~17]{BO15}, since it starts
with a dense matrix $G$ represented explicitly by $n^2$ coefficients.
This means that the algorithm is only of theoretical interest, i.e.,
for investigating whether a given matrix can be approximated at all,
but not attractive for real applications with large numbers
of degrees of freedom.

In the case of the Helmholtz equation, it has already been proven
\cite{BOME15} that directional interpolation provides us with an
$\mathcal{DH}^2$-matrix approximation, although the rank of this
approximation may be larger than necessary.
Our task is therefore only to \emph{recompress} an already
compressed $\mathcal{DH}^2$-matrix, we do not have to start from
scratch.
If we can arrange the algorithm in a way that avoids creating
the entire original approximation, we can obtain nearly optimal
storage requirements without the need of excessive storage for
intermediate results.

Our first step is to take advantage of the $\mathcal{DH}^2$-matrix
structure to reduce the complexity of our algorithm.
We assume that the original matrix is described by cluster bases
$(V_{tc})_{t\in\ctI,c\in\mathcal{D}_t}$,
$(W_{tc})_{t\in\ctI,c\in\mathcal{D}_t}$ and coupling matrices
$(S_{b})_{b\in\lfaII}$ such that
\begin{align*}
  G|_{\hat t\times\hat s}
  &= V_{tc} S_{b} W_{sc}^* &
  &\text{ for all } b=(t,s,c)\in\lfaII.
\end{align*}
We denote the transfer matrices of the cluster
basis $(V_{tc})_{t\in\ctI,c\in\mathcal{D}_t}$ by $E_{t'c}\in\bbbc^{k\times k}$ for
$t\in\ctI$, $t'\in\sons(t)$, $c\in\mathcal{D}_t$, and $c'=\sd{t}(c)$.

Our goal is to find improved row and column cluster bases
for the matrix $G$ by the algorithm outlined in
Section~\ref{se:compression_general}, but to take advantage of
the $\mathcal{DH}^2$-matrix structure of $G$ to reduce the
computational work.

We have already seen that we only have to describe an algorithm
for computing row bases, since applying this algorithm to the
adjoint matrix $G^*$ will yield a column basis, as well.
We call the improved row basis $(Q_{tc})_{t\in\ctI,c\in\mathcal{D}_t}$,
the adaptively-chosen rank of $Q_{tc}$ is called $k_{tc}$, and the
transfer matrices are called $(F_{t'c})_{t\in\ctI,t'\in\sons(t),c\in\mathcal{D}_t}$.

In particular, the matrices $V_{tc}$ and $W_{tc}$ are no longer
isometric, and ensuring reliable error control requires the use
of suitable weight matrices.
Since we assume that $V_{tc}$ and $W_{tc}$ result from directional
interpolation of constant order, we know that all of these matrices
have a fixed number $k$ of columns.

Our approach to speeding up the algorithm of
Section~\ref{se:compression_general} is to obtain a factorized
low-rank representation of the matrices $G_{tc}$ required by the
compression algorithm that allows us to efficiently compute an
improved basis.
In particular, we will prove that there are $k\times k$ matrices
$\widehat{Z}_{tc}$ for all $t\in\ctI$, $c\in\mathcal{D}_t$ such that
\begin{equation}\label{eq:recomp_weight_full}
  G_{tc} = V_{tc} \widehat{Z}_{tc}^* P_{tc}^*
\end{equation}
holds with an isometric matrix $P_{tc}\in\bbbc^{\mathcal{R}_{tc}\times k}$.
Since $P_{tc}$ is isometric, it does not influence the left
singular vectors or the non-zero singular values, so we can replace
$G_{tc}$ by the skinny matrix $V_{tc} \widehat{Z}_{tc}^*$ in the
compression algorithm of Section~\ref{se:compression_general}
and construct $Q_{tc}$ from the left singular vectors of this smaller
matrix without changing the result.

Let $t\in\ctI$, $c\in\mathcal{D}_t$.
For the moment, we assume that $t$ is not the root of the cluster
tree, i.e., that it has a parent $t^+\in\ctI$ with $t\in\sons(t^+)$.

We assume that the matrices $\widehat{Z}_{t^+c^+}$ have already been computed
for all directions $c^+\in\mathcal{D}_{t^+}$ with $\sd{t^+}(c^+) = c$, i.e.,
for all $c^+\in\sd{t^+}^{-1}(\{c\})$.
Let $\gamma := \#\sd{t^+}^{-1}(\{c\})$ denote the number of directions
in $\mathcal{D}_{t^+}$ that get mapped to $c$, and enumerate these
directions as $\sd{t^+}^{-1}(\{c\}) = \{c_1^+,\ldots,c_\gamma^+\}$.
Due to definition (\ref{eq:ancestors}), we have
\begin{equation*}
  \anc(t,c) = \{(t,c)\} \cup \bigcup_{\iota=1}^\gamma \anc(t^+,c_\iota^+).
\end{equation*}
We let $\sigma:=\#\brow(t,c)$ and $\brow(t,c)=\{s_1,\ldots,s_\sigma\}$.

Let now $s\in\brow^+(t,c)$.
By definition, we can either have $s\in\brow(t,c)$, or there is
a $\iota\in[1:\gamma]$ such that $s\in\brow^+(t^+,c_\iota^+)$.
In the first case, we have
\begin{equation*}
  G|_{\hat t\times\hat s}
  = V_{tc} S_b W_{sc}^*,
\end{equation*}
and we collect all of these matrices in an auxiliary matrix
\begin{align*}
  H_{tc}
  &:= \begin{pmatrix}
        V_{tc} S_{t s_1 c}
           W_{s_1 c}^* &
        \cdots &
        V_{tc} S_{t s_\sigma c}
           W_{s_\sigma c}^*
      \end{pmatrix}\\
  &= V_{tc}
      \underbrace{\begin{pmatrix}
         S_{t s_1 c} W_{s_1 c}^* &
         \cdots &
         S_{t s_\sigma c} W_{s_\sigma c}^*
       \end{pmatrix}}_{=:Y_{tc}}.
\end{align*}
The matrix $Y_{tc}$ has too many columns for a practical algorithm,
so we use the orthogonalization algorithm \cite[Algorithm~16]{BO10},
with a straightforward generalization, to find $k\times k$ matrices
$R_{W,s_i c}$ and isometric matrices $P_{W,s_i c}$ with
\begin{align}\label{eq:basis_weights_def}
  W_{s_i c} &= P_{W,s_i c} R_{W,s_i c} &
  &\text{ for all } i\in[1:\sigma]
\end{align}
and obtain
\begin{equation*}
  Y_{tc}
  = \underbrace{\begin{pmatrix}
      S_{t s_1 c} R_{W,s_1 c}^* &
      \cdots &
      S_{t s_\sigma c} R_{W,s_\sigma c}^*
    \end{pmatrix}}_{=:\widehat{Y}_{tc}}
    \underbrace{\begin{pmatrix}
      P_{W,s_1 c} & & \\
      & \ddots & \\
      & & P_{W,s_\sigma c}
    \end{pmatrix}^*}_{=:P_{Y,tc}^*}
  = \widehat{Y}_{tc} P_{Y,tc}^*.
\end{equation*}
The matrix $\widehat{Y}_{tc}$ is now sufficiently small, and the
isometric matrix $P_{Y,tc}$ can later be subsumed in $P_{tc}$.

In the second case, i.e., if $s\in\brow^+(t^+,c_\iota^+)$, we have
\begin{equation*}
  G|_{\hat t\times\hat s}
  = G_{t^+ c_\iota^+}|_{\hat t\times\hat s}.
\end{equation*}
Combining both cases yields
\begin{equation*}
  G_{tc}
  = \begin{pmatrix}
      H_{tc} &
      G_{t^+ c_1^+}|_{\hat t\times\mathcal{R}_{t^+ c_1^+}} &
      \cdots &
      G_{t^+ c_\gamma^+}|_{\hat t\times\mathcal{R}_{t^+ c_\gamma^+}}
    \end{pmatrix}.
\end{equation*}
Due to our assumption, we have low-rank representations of the
form (\ref{eq:recomp_weight_full}) at our disposal for
$G_{t^+ c_1^+},\ldots,G_{t^+ c_\gamma^+}$, and applying (\ref{eq:nested}) yields
\begin{align*}
  G_{tc}
  &= \begin{pmatrix}
       H_{tc} &
       V_{tc_1}|_{\hat t'\times k} \widehat{Z}_{tc_1}^* P_{tc_1}^* &
       \cdots &
       V_{t^+ c_\gamma^+}|_{\hat t\times k} \widehat{Z}_{t^+ c_\gamma^+}^*
            P_{t^+ c_\gamma^+}^*
     \end{pmatrix}\\
  &= V_{tc}
     \begin{pmatrix}
       \widehat{Y}_{tc} P_{Y,tc}^* &
       E_{t c_1^+} \widehat{Z}_{t^+ c_1^+}^* P_{t^+ c_1^+}^* &
       \cdots &
       E_{t c_\gamma^+} \widehat{Z}_{t^+ c_\gamma^+}^* P_{t^+ c_\gamma^+}^*
     \end{pmatrix}\\
  &= V_{tc}
     \underbrace{\begin{pmatrix}
       \widehat{Y}_{tc} &
       E_{t c_1^+} \widehat{Z}_{t^+ c_1^+}^* &
       \cdots &
       E_{t c_\iota^+} \widehat{Z}_{t^+ c_\gamma^+}^*
     \end{pmatrix}}_{=:Z_{tc}}
     \begin{pmatrix}
       P_{Y,tc}^* & & &\\
       & P_{t^+ c_1^+}^* & & \\
       & & \ddots & \\
       & & & P_{t^+ c_\gamma^+}^*
     \end{pmatrix}.
\end{align*}
We compute a skinny QR factorization
\begin{equation*}
  \widehat{P}_{tc} \widehat{Z}_{tc} = Z_{tc}^*
\end{equation*}
and find
\begin{equation*}
  G_{tc}
  = V_{tc}
    \widehat{Z}_{tc}^*
    P_{tc}^*
  \qquad\text{ with }\qquad
  P_{tc} := \begin{pmatrix}
    P_{Y,tc} & & & \\
    & P_{t^+ c_1^+} & & \\
    & & \ddots & \\
    & & & P_{t^+ c_\gamma^+}
  \end{pmatrix} \widehat{P}_{tc}.
\end{equation*}
As a product of two isometric matrices, $P_{tc}$ is again isometric,
and since $Z_{tc}$ has only $k$ rows, $\widehat{Z}_{tc}$ is a
$k\times k$ matrix.
It is important to note that we do not need the matrices
$P_{t^+ c_\iota^+}$ to compute $\widehat{Z}_{tc}$, we can carry out
the entire algorithm without storing any of the isometric matrices.

If $t$ is the root cluster, it has no parent $t^+$, but we can
still proceed as before by setting $\gamma=0$, i.e., without contributions
inherited from the ancestors.

Once we have the \emph{total weight matrices}
$\widehat{Z}_{tc}\in\bbbc^{k\times k}$ at our disposal, we can consider
the construction of the basis.
Since $V_{tc}$ is already the name of the original basis, we use
$Q_{tc}$ for the new one.
The transfer matrices for $Q_{tc}$ are denoted by $F_{t'c}$.

If $t$ is a leaf, we have to compute the left singular vectors and
singular values of the matrix
\begin{equation*}
  G_{tc} = V_{tc} \widehat{Z}_{tc}^* P_{tc}^*,
\end{equation*}
and this is equivalent to computing these quantities only for the
thin matrix $V_{tc} \widehat{Z}_{tc}^*$.
We choose a rank $k_{tc}$ for the new basis and use the first
$k_{tc}$ left singular vectors as columns of the new basis
matrix $Q_{tc}\in\bbbc^{\hat t\times k_{tc}}$.

If $t$ is not a leaf, we assume again $\sons(t)=\{t_1,t_2\}$,
let $c_1 := \sd{t_1}(c)$, $c_2 := \sd{t_2}(c)$, and have to compute
the left singular vectors and singular values of the matrix
\begin{align*}
  \widehat{G}_{tc}
  &= \begin{pmatrix}
       Q_{t_1 c_1}^* & \\
       & Q_{t_2 c_2}^*
     \end{pmatrix} G_{tc}
   = \begin{pmatrix}
       Q_{t_1 c_1}^* & \\
       & Q_{t_2 c_2}^*
     \end{pmatrix}
     V_{tc} \widehat{Z}_{tc}^* P_{tc}^*\\
  &= \begin{pmatrix}
       Q_{t_1 c_1}^* & \\
       & Q_{t_2 c_2}^*
     \end{pmatrix}
     \begin{pmatrix}
       V_{t_1 c_1} E_{t_1 c}\\
       V_{t_2 c_2} E_{t_2 c}
     \end{pmatrix} \widehat{Z}_{tc}^* P_{tc}^*
  = \begin{pmatrix}
      Q_{t_1 c_1}^* V_{t_1 c_1} E_{t_1 c}\\
      Q_{t_2 c_2}^* V_{t_2 c_2} E_{t_2 c}
    \end{pmatrix} \widehat{Z}_{tc}^* P_{tc}^*.
\end{align*}
In order to prepare this matrix efficiently, we introduce the matrices
\begin{align}\label{eq:C_tc}
  C_{tc} &:= Q_{tc}^* V_{tc} &
  &\text{ for all } t\in\ctI,\ c\in\mathcal{D}_t,
\end{align}
that describe the change of basis from $V_{tc}$ to $Q_{tc}$.
With these matrices, we have
\begin{equation*}
  \widehat{G}_{tc}
  = \underbrace{\begin{pmatrix}
      C_{t_1 c_1} E_{t_1 c}\\
      C_{t_2 c_2} E_{t_2 c}
    \end{pmatrix}}_{=:\widehat{V}_{tc}} \widehat{Z}_{tc}^* P_{tc}^*
\end{equation*}
and only have to compute the SVD of $\widehat{V}_{tc} \widehat{Z}_{tc}^*$,
choose a rank $k_{tc}$, and use the first $k_{tc}$ left singular
vectors as columns of the matrix $\widehat{Q}_{tc}$ that can
be split into
\begin{equation*}
  \widehat{Q}_{tc}
  = \begin{pmatrix}
      F_{t_1 c}\\ F_{t_2 c}
    \end{pmatrix}
\end{equation*}
to obtain the transfer matrices for the new cluster basis.
In this case, we can use $C_{tc} = \widehat{Q}_{tc}^* \widehat{V}_{tc}$
to compute the basis-change matrix efficiently.

%
%

\section{Complexity}

In order to analyze the complexity of the new algorithms, we follow
the approach of \cite[Section~5]{BO15}:
for the sake of simplicity, we assume that all bounding boxes on
the same level are identical up to translation.
We also assume that the cluster tree is geometrically regular and
that the surface $\Omega$ is two-dimensional, i.e., that there
are constants $\Csb,\Csn,\Cbp,\Cbb,\Cov,\Crs,\Cun\in\bbbr_{>0}$
such that
\begin{align*}
  \diam(B_t) &\leq \Csb \diam(B_{t'}) &
  &\text{ for all } t\in\ctI,\ t'\in\sons(t),\\
  \#\sons(t) &\leq \Csn,\quad \#\sons(t)\neq 1 &
  &\text{ for all } t\in\ctI,\\
  |\Omega\cap\mathcal{B}(x,r)| &\leq \Cbp r^2 &
  &\text{ for all } x\in\bbbr^3,\ r\in\bbbr_{\geq 0},\\
  \diam^2(B_t) &\leq \Cbb |B_t\cap\Omega| &
  &\text{ for all } t\in\ctI,\\
  \#\{ t\in\ctIl{\ell}\ :\ x\in B_t \} &\leq \Cov &
  &\text{ for all } x\in\Omega,\ \ell\in[0:\pI],\\
  \Clf \kappa \diam(B_t) &\leq 1 &
  &\text{ for all leaves } t\in\lfI,\\
  \Crs^{-1} k &\leq \#\hat t \leq \Crs k &
  &\text{ for all leaves } t\in\lfI,\\
  \eta_2 \dist(B_t, B_s) < \diam(B_t)
  &\Rightarrow \#\hat s\leq \Cun \#\hat t &
  &\text{ for all } t\in\lfI,\ s\in\ctI\\
  & & & \quad\text{ with } \level(t)=\level(s).
\end{align*}
The constant $\Csb$ ensures that child clusters do
not decreases in size too quickly, while $\Csn$ provides an upper
bound for the number of children.
$\Cbp$ and $\Cbb$ measure how ``convoluted'' the surface $\Omega$
is, $\Cov$ describes the overlap of clusters.
$\Clf$ and $\Crs$ ensure that the leaves are small enough compared
to the wavelength, and $\Cun$ can be interpreted as a quasi-uniformity
condition for neighbouring leaf clusters.
Additionally we assume that the number of directions associated with a
cluster is bounded, i.e., that there is a constant $\Cdi\in\bbbr_{>0}$ with
\begin{align}\label{eq:directions_cluster}
  \#\mathcal{D}_t &\leq \Cdi (1 + \kappa^2 \diam^2(B_t)) &
  &\text{ for all } t\in\ctI.
\end{align}
If the directions are constructed as in
Remark~\ref{re:construction_of_directions}, this condition is
satisfied.

According to \cite[Lemma~8]{BO15}, there is a \emph{sparsity constant}
$\Csp \in \bbbr_{>0}$ such that
\begin{align}\label{eq:sparsity_bound}
 \sum_{c \in \mathcal{D}_{t}} \# \brow(t,c) \leq
\begin{cases}
C_{sp} & \text{if } C_{cp} \kappa \diam(B_t) < 1\\
C_{sp} \kappa^{2} \diam(B_{t})^2 & otherwise
\end{cases}
\end{align}
holds for all $t\in\ctI$.
We introduce the short notation
\begin{align*}
  \Cspt &:= \begin{cases}
              \Csp & \text{ if } \Csb \kappa \diam(B_t) < 1\\
              \Csp \kappa^2 \diam(B_t)^2 & \text{ otherwise}
            \end{cases} &
  &\text{ for all } t\in\ctI.
\end{align*}
According to \cite[Lemma~9]{BO15}, there is a constant $\Clv\in\bbbr_{>0}$
such that
\begin{subequations}\label{eq:cluster_bounds}
\begin{align}
  \#\ctIl{\ell} &\leq \Clv \frac{|\Omega|}{\diam^2(B_t)} &
  &\text{ for all } \ell\in[0:\pI],\ t\in\ctIl{\ell},
     \label{eq:level_bound}\\
  \#\ctI &\leq \Clv \frac{\#\Idx}{k}.
     \label{eq:clusters_bound}
\end{align}
\end{subequations}
The estimates (\ref{eq:sparsity_bound}) and (\ref{eq:cluster_bounds})
give rise to the following fundamental result.

%
%
\begin{lemma}[Block and cluster sums]
\label{le:sums}
There are constants $\Cbs,\Ccs\in\bbbr_{>0}$ with
\begin{subequations}
\begin{align}
  \sum_{t\in\ctI} \sum_{c\in\mathcal{D}_t} \#\brow(t,c)
  &\leq \Cbs \left( \#\ctI + \Clv (\pI+1) \kappa^2 \right),
     \label{eq:block_sum}\\
  \sum_{t\in\ctI} \#\mathcal{D}_t
  &\leq \Ccs \left( \#\ctI + \Clv (\pI+1) \kappa^2 \right).
     \label{eq:cluster_sum}
\end{align}
\end{subequations}
\end{lemma}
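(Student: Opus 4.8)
The plan is to prove both estimates by summing the pointwise bounds \eqref{eq:sparsity_bound} and \eqref{eq:directions_cluster} over the cluster tree, organizing the sum by levels and splitting each level into ``low-frequency'' clusters (those with $\Csb\kappa\diam(B_t)<1$) and ``high-frequency'' clusters (the rest). For the low-frequency clusters the right-hand sides of \eqref{eq:sparsity_bound} and \eqref{eq:directions_cluster} are bounded by absolute constants, so their total contribution is at most a constant times $\#\ctI$. For the high-frequency clusters the right-hand sides are proportional to $\kappa^2\diam^2(B_t)$, and here the level bound \eqref{eq:level_bound} is exactly what is needed: on level $\ell$ there are at most $\Clv|\Omega|/\diam^2(B_t)$ clusters, each contributing $\mathcal{O}(\kappa^2\diam^2(B_t))$, so the $\diam^2(B_t)$ factors cancel and the level contributes $\mathcal{O}(\Clv|\Omega|\kappa^2)$. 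Summing over the $\pI+1$ levels gives the $\Clv(\pI+1)\kappa^2$ term (absorbing $|\Omega|$ into the constant, or keeping it if one prefers). Adding the two parts yields the claimed form.

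In more detail, for \eqref{eq:block_sum} I would write
\begin{align*}
  \sum_{t\in\ctI}\sum_{c\in\mathcal{D}_t}\#\brow(t,c)
  &= \sum_{\ell=0}^{\pI}\ \sum_{t\in\ctIl{\ell}}\ \sum_{c\in\mathcal{D}_t}\#\brow(t,c)
   \leq \sum_{\ell=0}^{\pI}\ \sum_{t\in\ctIl{\ell}}\Cspt.
\end{align*}
For levels with $\Csb\kappa\diam(B_t)<1$ we have $\Cspt=\Csp$ and hence $\sum_{t\in\ctIl{\ell}}\Cspt\leq\Csp\#\ctIl{\ell}$; summing these over all such levels gives at most $\Csp\#\ctI$. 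For the remaining levels $\Cspt=\Csp\kappa^2\diam^2(B_t)$, and by \eqref{eq:level_bound}, $\sum_{t\in\ctIl{\ell}}\Cspt\leq\Csp\kappa^2\diam^2(B_t)\cdot\Clv|\Omega|/\diam^2(B_t)=\Csp\Clv|\Omega|\kappa^2$; summing over at most $\pI+1$ levels gives $\Csp\Clv|\Omega|(\pI+1)\kappa^2$. Setting $\Cbs:=\Csp\max\{1,|\Omega|\}$ (or $\Cbs:=\Csp(1+|\Omega|)$) gives \eqref{eq:block_sum}. The argument for \eqref{eq:cluster_sum} is identical, using \eqref{eq:directions_cluster} in place of \eqref{eq:sparsity_bound}: split $1+\kappa^2\diam^2(B_t)$ into its two summands, bound $\sum_{t\in\ctI}1=\#\ctI$ directly, and bound $\sum_{\ell}\sum_{t\in\ctIl{\ell}}\kappa^2\diam^2(B_t)\leq\sum_{\ell}\Clv|\Omega|\kappa^2\leq\Clv|\Omega|(\pI+1)\kappa^2$ by \eqref{eq:level_bound}; then take $\Ccs:=\Cdi\max\{1,|\Omega|\}$.

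I expect no genuine obstacle here; the only points requiring a little care are: (i) making sure the split between the two regimes in \eqref{eq:sparsity_bound} and the regime boundary ``$\Csb\kappa\diam(B_t)<1$'' is applied level-by-level consistently (all clusters on a level share the same bounding-box diameter by the standing assumption that bounding boxes on a level are identical up to translation, so a level is uniformly in one regime or the other); (ii) deciding whether to absorb $|\Omega|$ into $\Cbs,\Ccs$ or to leave it explicit — I would absorb it, matching the statement; and (iii) ensuring that for the low-frequency levels the sum of $\#\ctIl{\ell}$ over those levels is bounded by $\#\ctI$, which is immediate since the levels partition $\ctI$. Hence the whole proof is a two-line level decomposition followed by an application of \eqref{eq:level_bound}.
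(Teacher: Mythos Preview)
Your proposal is correct and follows essentially the same approach as the paper: split the sum into the low-frequency and high-frequency regimes via \eqref{eq:sparsity_bound} (resp.\ \eqref{eq:directions_cluster}), bound the low-frequency contribution by $\Csp\#\ctI$ (resp.\ $\Cdi\#\ctI$), and handle the high-frequency contribution level by level using \eqref{eq:level_bound} so that the $\diam^2(B_t)$ factors cancel. Your constants $\Cbs=\Csp\max\{1,|\Omega|\}$ and $\Ccs=\Cdi\max\{1,|\Omega|\}$ coincide with the paper's choices.
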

\begin{proof}
Combining (\ref{eq:sparsity_bound}) and (\ref{eq:level_bound}) yields
\begin{align*}
  \sum_{t\in\ctI} \sum_{c\in\mathcal{D}_t} \#\brow(t,c)
  &= \sum_{\substack{t\in\ctI\\ \Csb\kappa\diam(B_t) < 1}}
     \sum_{c\in\mathcal{D}_t} \#\brow(t,c)
   + \kern -20pt \sum_{\substack{t\in\ctI\\ \Csb\kappa\diam(B_t) \geq 1}}
     \sum_{c\in\mathcal{D}_t} \#\brow(t,c)\\
  &\leq \sum_{\substack{t\in\ctI\\ \Csb\kappa\diam(B_t) < 1}} \Csp
    + \sum_{\substack{t\in\ctI\\ \Csb\kappa\diam(B_t) \geq 1}}
        \Csp \kappa^2 \diam^2(B_t)\\
  &\leq \Csp \#\ctI
    + \sum_{\ell=0}^{\pI} \sum_{t\in\ctIl{\ell}}
        \Csp \kappa^2 \diam^2(B_t)\\
  &\leq \Csp \#\ctI + \sum_{\ell=0}^{\pI} 
        \Clv \frac{|\Omega|}{\diam^2(B_t)} \Csp \kappa^2 \diam^2(B_t)\\
  &\leq \Csp \#\ctI + \Clv \Csp |\Omega| (\pI+1)  \kappa^2,
\end{align*}
and we obtain (\ref{eq:block_sum}) by choosing
$\Cbs := \max\{ \Csp, \Csp |\Omega| \}$.

For the second estimate, we combine (\ref{eq:directions_cluster})
with (\ref{eq:level_bound}) to find
\begin{align*}
  \sum_{t\in\ctI} \#\mathcal{D}_t
  &\leq \Cdi \sum_{t\in\ctI} (1 + \kappa^2 \diam^2(B_t))
   = \Cdi \#\ctI
      + \Cdi \sum_{\ell=0}^{\pI} \sum_{t\in\ctIl{\ell}} \kappa^2
          \diam^2(B_t)\\
  &\leq \Cdi \#\ctI
      + \Cdi \sum_{\ell=0}^{\pI} \Clv \frac{|\Omega|}{\diam^2(B_t)}
          \kappa^2 \diam^2(B_t)\\
  &= \Cdi \#\ctI + \Cdi (\pI+1) \Clv |\Omega| \kappa^2,
\end{align*}
and we can obtain (\ref{eq:cluster_sum}) by choosing
$\Ccs := \max\{ \Cdi, \Cdi |\Omega| \}$.
\end{proof}

To establish an estimate for the complexity we need to bound the work
of the QR factorization as well as the SVD.
We assume that there are constants $\Cqr,\Csvd$ such that the work
of computing the QR factorization and the SVD of a matrix
$A\in\bbbc^{m\times n}$ up to machine accuracy is bounded by
\begin{subequations}
\begin{align}
  &\Cqr m n \min\{m,n\},\label{eq:qrbound}\\
  &\Csvd m n \min\{m,n\},\label{eq:svdbound}
\end{align}
\end{subequations}
respectively.

Now we can consider the complexity of the different phases of the
recompression algorithm.
We first have to compute the \emph{basis weight matrices} $R_{W,tc}$
for the original cluster basis $(W_{tc})_{t\in\ctI,c\in\mathcal{D}_t}$.

%
%
\begin{lemma}[Basis weights]
There is a constant $\Cbw\in\bbbr_{>0}$ such that computing the basis
weights $(R_{W,tc})_{t\in\ctI,c\in\mathcal{D}_t}$, cf.
(\ref{eq:basis_weights_def}), requires not more than
\begin{equation*}
  \Cbw k^3 \left( \frac{\#\Idx}{k} + (\pI+1) \kappa^2 \right)
  \text{ operations.}
\end{equation*}
\end{lemma}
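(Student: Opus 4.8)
The plan is to analyze the cost of the orthogonalization step that produces the factorizations $W_{sc} = P_{W,sc} R_{W,sc}$ for all clusters $s \in \ctI$ and directions $c \in \mathcal{D}_s$. Since $W$ is a directional cluster basis of fixed rank $k$, the key observation is that the orthogonalization algorithm \cite[Algorithm~16]{BO10} proceeds recursively from the leaves upward, and at each cluster it works only with $k\times k$ matrices (using the nested structure), so that the cost per cluster-direction pair $(s,c)$ is $\mathcal{O}(k^3)$: at a leaf we compute a skinny QR factorization of $W_{sc}\in\bbbc^{\hat s\times k}$, which by (\ref{eq:qrbound}) costs at most $\Cqr (\#\hat s) k^2$, and by the assumption $\#\hat s\leq\Crs k$ for leaves this is $\mathcal{O}(k^3)$; at a non-leaf cluster we assemble the matrix $\bigl(\begin{smallmatrix} R_{W,s_1 c_1} E_{s_1 c}\\ R_{W,s_2 c_2} E_{s_2 c}\end{smallmatrix}\bigr)$ — a matrix with $\mathcal{O}(k)$ rows and $k$ columns — and compute its skinny QR factorization, again $\mathcal{O}(k^3)$ by (\ref{eq:qrbound}), with the matrix multiplications $R_{W,s_i c_i} E_{s_i c}$ also costing $\mathcal{O}(k^3)$.

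Once the per-pair bound of $C k^3$ is established, the total cost is bounded by
\begin{equation*}
  C k^3 \sum_{s\in\ctI} \#\mathcal{D}_s,
\end{equation*}
and here I would invoke (\ref{eq:cluster_sum}) from Lemma~\ref{le:sums} directly: the sum $\sum_{s\in\ctI}\#\mathcal{D}_s$ is bounded by $\Ccs(\#\ctI + \Clv(\pI+1)\kappa^2)$. Combining this with the cluster-count bound (\ref{eq:clusters_bound}), which gives $\#\ctI\leq\Clv\#\Idx/k$, yields
\begin{equation*}
  C k^3 \Ccs\left(\Clv\frac{\#\Idx}{k} + \Clv(\pI+1)\kappa^2\right)
  \leq \Cbw k^3\left(\frac{\#\Idx}{k} + (\pI+1)\kappa^2\right)
\end{equation*}
with $\Cbw := C\Ccs\Clv$, which is exactly the claimed bound.

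The main obstacle, and the step that needs the most care, is verifying that the generalized version of \cite[Algorithm~16]{BO10} really does run in $\mathcal{O}(k^3)$ per cluster-direction pair in the \emph{directional} setting — in particular, that when we process a child cluster $s'$ we can reuse its already-computed weight $R_{W,s'c'}$ for \emph{each} parent direction $c$ with $\sd{s}(c) = c'$ without redundant recomputation, and that the relation $W_{sc}|_{\hat s'\times k} = W_{s'c'} E_{s'c}$ from (\ref{eq:nested}) is precisely what lets us replace the large block $W_{s'c'}$ by the small triangular factor $R_{W,s'c'}$ inside the recursion. This is the directional analogue of the standard $\mathcal{H}^2$ cluster-basis orthogonalization, and the bookkeeping over the (non-injective) maps $\sd{s}$ is where a careless argument would slip; everything downstream is then a routine application of Lemma~\ref{le:sums}. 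I would also note in passing that the leaf case requires the quasi-uniformity assumption $\#\hat s\leq\Crs k$ to convert the QR cost $\Cqr(\#\hat s)k\min\{\#\hat s,k\}$ into a pure $k^3$ bound, while the non-leaf case uses only the fixed-rank structure.
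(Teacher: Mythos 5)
Your proposal is correct and follows essentially the same route as the paper: a per-cluster-per-direction cost of $\mathcal{O}(k^3)$ for the directional adaptation of the orthogonalization algorithm, summed via (\ref{eq:cluster_sum}) of Lemma~\ref{le:sums} and converted with (\ref{eq:clusters_bound}). The paper simply cites the algorithm's cost as $(\Cqr+2)k^3$ per pair without unfolding the leaf/non-leaf case distinction that you spell out, but the substance and the downstream bookkeeping are identical.
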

\begin{proof}
Using \cite[Algorithm~16]{BO10}, adapted for multiple directions per
cluster, this task takes $(\Cqr+2) k^3$ operations per cluster
and direction, and Lemma~\ref{le:sums} together with (\ref{eq:clusters_bound})
yields
\begin{align*}
  \sum_{t\in\ctI} \sum_{c\in\mathcal{D}_t} (\Cqr+2) k^3
  &\leq (\Cqr+2) k^3 \Ccs (\#\ctI + \Clv (\pI+1) \kappa^2)\\
  &= \Ccs \Clv (\Cqr+2) k^3 \left(\frac{\#\Idx}{k}
                     + (\pI+1) \kappa^2\right).
\end{align*}
We let $\Cbw := \Ccs \Clv (\Cqr+2)$ to complete the proof.
\end{proof}

The second step is to compute the \emph{total weight matrices}
$\widehat{Z}_{tc}$ for the original cluster basis
$(V_{tc})_{t\in\ctI,c\in\mathcal{D}_t}$.

%
%
\begin{lemma}[Total weights]
There is a constant $\Cwe\in\bbbr_{>0}$ such that computing the total
weights $(\widehat{Z}_{tc})_{t\in\ctI,c\in\mathcal{D}_t}$ requires not more than
\begin{equation*}
  \Cwe k^3 \left( \frac{\#\Idx}{k} + (\pI+1) \kappa^2 \right)
  \text{ operations.}
\end{equation*}
\end{lemma}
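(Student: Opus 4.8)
The goal is to count the total work of computing all the matrices $\widehat Z_{tc}$ by walking up the cluster tree and, for each cluster $t$ and direction $c$, assembling the matrix $Z_{tc}$ and computing its skinny QR factorization. The key observation is that $Z_{tc}$ has only $k$ rows (it is of the form $(\widehat Y_{tc}\mid E_{tc_1^+}\widehat Z_{t^+c_1^+}^*\mid\cdots)$, each block being $k\times k$), so the QR factorization $\widehat P_{tc}\widehat Z_{tc}=Z_{tc}^*$ of a matrix with $k$ rows and at most $(\sigma+\gamma)k$ columns costs $\mathcal O(\Cqr(\sigma+\gamma)k^3)$ by (\ref{eq:qrbound}), where $\sigma=\#\brow(t,c)$ and $\gamma=\#\sd{t^+}^{-1}(\{c\})$. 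I would first bound the cost of forming $\widehat Y_{tc}$: each of the $\sigma$ blocks $S_{ts_ic}R_{W,s_ic}^*$ is a product of two $k\times k$ matrices, costing $\mathcal O(k^3)$, for a total of $\mathcal O(\sigma k^3)$ per $(t,c)$. Then each of the $\gamma$ blocks $E_{tc_\iota^+}\widehat Z_{t^+c_\iota^+}^*$ is again a $k\times k$ product costing $\mathcal O(k^3)$, for $\mathcal O(\gamma k^3)$ per $(t,c)$. Adding the QR cost, the work for a single pair $(t,c)$ is $\mathcal O((\sigma+\gamma)k^3)$ with a constant depending only on $\Cqr$.

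Next I would sum over all clusters and directions. The $\sigma$-contribution gives $\sum_{t\in\ctI}\sum_{c\in\mathcal D_t}\#\brow(t,c)$, which is bounded by Lemma~\ref{le:sums}, (\ref{eq:block_sum}), by $\Cbs(\#\ctI+\Clv(\pI+1)\kappa^2)$. For the $\gamma$-contribution I need $\sum_{t\in\ctI}\sum_{c\in\mathcal D_t}\#\sd{t^+}^{-1}(\{c\})$; but as $c$ ranges over $\mathcal D_t$ for a fixed child $t$ of $t^+$, the preimages $\sd{t^+}^{-1}(\{c\})$ are disjoint and their union is $\mathcal D_{t^+}$, so $\sum_{c\in\mathcal D_t}\#\sd{t^+}^{-1}(\{c\})=\#\mathcal D_{t^+}\leq\Csn\,\#\mathcal D_t$ is not quite right — rather $\sum_{c\in\mathcal D_t}\gamma=\#\mathcal D_{t^+}$, and summing over all children $t$ of $t^+$ contributes $\#\sons(t^+)\,\#\mathcal D_{t^+}\leq\Csn\,\#\mathcal D_{t^+}$, so the grand total is bounded by $\Csn\sum_{t^+\in\ctI}\#\mathcal D_{t^+}$, which by (\ref{eq:cluster_sum}) is $\leq\Csn\Ccs(\#\ctI+\Clv(\pI+1)\kappa^2)$. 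Combining with (\ref{eq:clusters_bound}) to replace $\#\ctI$ by $\Clv\#\Idx/k$ yields the claimed bound with $\Cwe$ an explicit combination of $\Cqr$, $\Cbs$, $\Ccs$, $\Csn$, and $\Clv$.

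The main obstacle, such as it is, is the bookkeeping for the $\gamma$-sum: one must be careful that $\gamma$ depends on both $t$ and $c$, and that the natural reindexing is over the parent $t^+$ and its directions, so the clean bound comes from recognizing that $\sum_{c\in\mathcal D_t}\#\sd{t^+}^{-1}(\{c\})=\#\mathcal D_{t^+}$ (a partition of $\mathcal D_{t^+}$ into fibres) and then pulling in the bound $\#\sons(t^+)\leq\Csn$. A minor secondary point is to confirm that the QR cost estimate (\ref{eq:qrbound}) applies with $m=k$ and $n=(\sigma+\gamma)k$, giving $mn\min\{m,n\}=k\cdot(\sigma+\gamma)k\cdot k=(\sigma+\gamma)k^3$ as used. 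Everything else is the routine matrix-product accounting already made standard in \cite{BO15}.
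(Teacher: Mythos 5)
Your proposal is correct and follows essentially the same route as the paper's proof: the same per-pair cost $(\Cqr+2)(\sigma+\gamma)k^3$ for assembly plus skinny QR, the same fibre-partition identity $\sum_{c\in\mathcal{D}_t}\#\sd{t^+}^{-1}(\{c\})=\#\mathcal{D}_{t^+}$ combined with $\#\sons(t^+)\leq\Csn$, and the same appeal to Lemma~\ref{le:sums} and (\ref{eq:clusters_bound}) at the end. Nothing is missing.
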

\begin{proof}
Let $t\in\ctI$ and $c\in\mathcal{D}_t$.

We have to set up the matrix $Z_{tc}$.
For all $s\in\brow(t,c)$, this means computing the product
$S_{t s c} R_{W,sc}^*$, which takes not more than $2 k^3$ operations.

If there is a corresponding parent cluster $t^+$, we also have to compute
the product of the transfer matrix $E_{t c^{+}}$ and the parent's weight
$\widehat{Z}_{t^{+}c^{+}}$ for all $c^+\in\sd{t^+}^{-1}(\{c\})$, which takes
not more than $2 k^3$ operations per product.

We denote the number of columns of $Z_{tc}$ by
\begin{equation*}
  m := k \#\sd{t^+}^{-1}(\{c\}) + k (\#\brow(t,c))
\end{equation*}
and have shown that $2 m k^2$ operations are needed to set up this
matrix.

Now follows a QR factorization of the matrix $Z_{tc}^*\in\bbbc^{m\times k}$, which
requires not more than $\Cqr m k \min\{m,k\} \leq \Cqr m k^2$ operations.

In consequence, the complexity for the whole cluster tree is bounded by
\begin{equation*}
  \sum_{t\in\ctI} \sum_{c\in\mathcal{D}_t} (\Cqr+2) m k^2
  = (\Cqr+2) k^3 \sum_{t\in\ctI} \sum_{c\in\mathcal{D}_t}
        \#\sd{t^+}^{-1}(\{c\}) + \#\brow(t,c).
\end{equation*}
Since $\sd{t^+}$ maps every direction $c^+\in\mathcal{D}_{t^+}$ to a
direction $c=\sd{t^+}(c^+)\in\mathcal{D}_t$, we have
\begin{align*}
  \mathcal{D}_{t^+} &= \bigcup_{c\in\mathcal{D}_t} \sd{t^+}^{-1}(\{c\}), &
  \#\mathcal{D}_{t^+} &= \sum_{c\in\mathcal{D}_t} \#\sd{t^+}^{-1}(\{c\})
\end{align*}
and can use Lemma~\ref{le:sums} (and the convention
$\mathcal{D}_{t^+}=\emptyset$ if $t$ is the root) to find the bound
\begin{align*}
  \sum_{t\in\ctI} \sum_{c\in\mathcal{D}_t} (\Cqr+2) m k^2
  &= (\Cqr+2) k^3 \sum_{t\in\ctI} \sum_{c\in\mathcal{D}_t}
         \#\sd{t^+}^{-1}(\{c\}) + \#\brow(t,c)\\
  &= (\Cqr+2) k^3 \sum_{t\in\ctI} \#\mathcal{D}_{t^+}
         + \sum_{c\in\mathcal{D}_t} \#\brow(t,c)\\
  &= (\Cqr+2) k^3 \sum_{t^+\in\ctI} \sum_{t\in\sons(t^+)} \#\mathcal{D}_{t^+}\\
  &\qquad + (\Cqr+2) k^3 \sum_{t\in\ctI} \sum_{c\in\mathcal{D}_t}
         \#\brow(t,c)\\
  &\leq (\Cqr+2) k^3 \sum_{t^+\in\ctI} \Csn \#\mathcal{D}_{t^+}\\
  &\qquad + (\Cqr+2) k^3 \sum_{t\in\ctI} \sum_{c\in\mathcal{D}_t}
         \#\brow(t,c)\\
  &\leq (\Cqr+2) \Csn k^3 \Ccs (\#\ctI + \Clv (\pI+1) \kappa^2)\\
  &\qquad + (\Cqr+2) k^3 \Cbs (\#\ctI + \Clv (\pI+1) \kappa^2)\\
  &= (\Cqr+2) (\Csn \Ccs + \Cbs) k^3 (\#\ctI + \Clv (\pI+1) \kappa^2).
\end{align*}
We can use (\ref{eq:clusters_bound}) to complete the proof
with $\Cwe := (\Cqr+2) (\Csn \Ccs + \Cbs) \Clv$.
\end{proof}

Now we can address the construction of the improved cluster basis.

%
%
\begin{lemma}[Truncation]\label{lem:comptrun}
There is a constant $\Ctr\in\bbbr_{>0}$ such that computing the
improved cluster basis $(Q_{tc})_{t\in\ctI,c\in\mathcal{D}_t}$ requires
not more than
\begin{equation*}
  \Ctr k^3 \left( \frac{\#\Idx}{k} + (\pI + 1) \kappa^2 \right)
  \text{ operations.}
\end{equation*}
\end{lemma}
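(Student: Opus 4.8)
The plan is to bound, cluster by cluster and direction by direction, the work needed for the SVD that produces $Q_{tc}$ (in the leaf case) or $\widehat{Q}_{tc}$ (in the non-leaf case), together with the auxiliary products $C_{tc} = Q_{tc}^* V_{tc}$ and $\widehat{V}_{tc} = \binom{C_{t_1c_1}E_{t_1c}}{C_{t_2c_2}E_{t_2c}}$, and then to sum these per-cluster bounds over $\ctI$ using Lemma~\ref{le:sums} and (\ref{eq:clusters_bound}). The key point is that, thanks to the factorization (\ref{eq:recomp_weight_full}), the SVD never sees the large matrix $G_{tc}\in\bbbc^{\hat t\times\mathcal{R}_{tc}}$ but only the skinny matrix $V_{tc}\widehat{Z}_{tc}^*$ (leaf case) or $\widehat{V}_{tc}\widehat{Z}_{tc}^*$ (non-leaf case), both of which have only $k$ columns and a bounded number of rows.

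First I would treat the leaf case. Here $V_{tc}\in\bbbc^{\hat t\times k}$ with $\#\hat t\leq\Crs k$ by assumption, so forming $V_{tc}\widehat{Z}_{tc}^*$ costs $\mathcal{O}(\Crs k^3)$, and by (\ref{eq:svdbound}) its SVD costs at most $\Csvd\,(\#\hat t)\,k\min\{\#\hat t,k\}\leq\Csvd\Crs k^3$; extracting the first $k_{tc}\leq k$ left singular vectors as $Q_{tc}$ and forming $C_{tc}=Q_{tc}^*V_{tc}$ adds another $\mathcal{O}(\Crs k^3)$. So the leaf work per $(t,c)$ is $\mathcal{O}(k^3)$ with a constant depending only on $\Crs,\Csvd$. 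Next, the non-leaf case with $\sons(t)=\{t_1,t_2\}$: the matrices $C_{t_ic_i}\in\bbbc^{k_{t_ic_i}\times k}$ and $E_{t_ic}\in\bbbc^{k\times k}$ are already available, so forming $\widehat{V}_{tc}$ costs $\mathcal{O}(k^3)$, it has at most $2k$ rows and $k$ columns, hence $\widehat{V}_{tc}\widehat{Z}_{tc}^*$ and its SVD cost $\mathcal{O}(k^3)$ by (\ref{eq:svdbound}), and $C_{tc}=\widehat{Q}_{tc}^*\widehat{V}_{tc}$ with the split into transfer matrices $F_{t_1c},F_{t_2c}$ costs another $\mathcal{O}(k^3)$. (The general bound $\#\sons(t)\leq\Csn$ only changes the constant, since $\widehat{V}_{tc}$ then has at most $\Csn k$ rows.) Thus in all cases the work attributable to a single pair $(t,c)$ is bounded by $C k^3$ for a constant $C$ depending on $\Crs,\Csn,\Csvd$.

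Finally I would sum over the tree: the total work is at most $\sum_{t\in\ctI}\sum_{c\in\mathcal{D}_t} C k^3 = C k^3 \sum_{t\in\ctI}\#\mathcal{D}_t$, and (\ref{eq:cluster_sum}) of Lemma~\ref{le:sums} bounds $\sum_{t\in\ctI}\#\mathcal{D}_t$ by $\Ccs(\#\ctI+\Clv(\pI+1)\kappa^2)$, while (\ref{eq:clusters_bound}) gives $\#\ctI\leq\Clv\#\Idx/k$. Combining,
\begin{equation*}
  \sum_{t\in\ctI}\sum_{c\in\mathcal{D}_t} C k^3
  \leq C\Ccs k^3\left(\Clv\frac{\#\Idx}{k}+\Clv(\pI+1)\kappa^2\right)
  = C\Ccs\Clv k^3\left(\frac{\#\Idx}{k}+(\pI+1)\kappa^2\right),
\end{equation*}
so the claim holds with $\Ctr := C\Ccs\Clv$. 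The main obstacle — and the only place where any care beyond bookkeeping is needed — is to confirm that every SVD and every matrix product really does involve only $\mathcal{O}(k)$-dimensional factors; this rests entirely on the identity (\ref{eq:recomp_weight_full}) $G_{tc}=V_{tc}\widehat{Z}_{tc}^*P_{tc}^*$ established earlier, which lets us discard the isometric factor $P_{tc}$ without affecting left singular vectors or nonzero singular values, so that neither $\#\mathcal{R}_{tc}$ nor $\#\hat s$ for $s\in\brow^+(t,c)$ ever enters the cost. Everything else is a routine application of (\ref{eq:svdbound}), the leaf-size bound $\#\hat t\leq\Crs k$, and the two sum estimates from Lemma~\ref{le:sums}.
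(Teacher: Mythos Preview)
Your proposal is correct and follows essentially the same approach as the paper: bound the per-$(t,c)$ work by $\mathcal{O}(k^3)$ in both the leaf and non-leaf cases (using $\#\hat t\leq\Crs k$ for leaves and $k_{t'c'}\leq k$, $\#\sons(t)\leq\Csn$ for non-leaves), then sum via (\ref{eq:cluster_sum}) and (\ref{eq:clusters_bound}). The paper tracks the constants a bit more explicitly and mentions the $\mathcal{O}(k)$ cost of choosing $k_{tc}$ from the singular values, but the argument is otherwise identical.
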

\begin{proof}
Let $t\in\ctI$ and $c\in\mathcal{D}_t$.

If $t$ is a leaf, $V_{tc}\in\bbbc^{m\times k}$, $m:=\#\hat t$, is used directly.
We compute the product $V_{tc} \widehat{Z}_{tc}^*\in\bbbc^{m\times k}$ in not
more than $2 m k^2$ operations, its SVD in not more than $\Csvd m k \min\{m,k\}
\leq \Csvd m k^2$ operations, and the basis-change matrix
$C_{tc}$ in not more than $2 m k^2$ operations.

Due to our assumptions, we have $m = \#\hat t \leq \Crs k$, and the number
of operations for leaf clusters is bounded by $(\Csvd+4) \Crs k^3$.

If $t$ is not a leaf, we compute the product of the transfer matrix $E_{t'c}$
and the already calculated basis-change matrix $C_{t'c'}$ for every
child $t'\in\sons(t)$ and $c' = \sd{t'}(c)$, and the resulting matrix
$\widehat{V}_{tc}$ has $m := \sum\limits_{t' \in \sons(t)} k_{t'c'}$ rows
and $k$ columns.
Computing all products takes not more than
\begin{equation*}
  \sum_{t' \in \sons(t)} 2 k^{2} k_{t'c'} = 2 m k^2
  \text{ operations.}
\end{equation*}
Now the matrix $V_{tc} \widehat{Z}_{tc}^*$ has to be computed, this takes
not more than $2 m k^2$ operations.
Due to (\ref{eq:svdbound}), its SVD can be computed in
$\Csvd m k \min\{m,k\} \leq \Csvd m k^2$ operations.
Finally the basis-change matrix $C_{tc}$ can be computed in not more
than $2 m k^2$ operations.

Due to our assumptions, $\#\sons(t)\leq\Csn$ holds and we have
$m\leq \Csn k$, so the number of operations for non-leaf clusters
is bounded by $(\Csvd+6) \Csn k^3$.

Finding the correct ranks $k_{tc}$ requires the inspection of $m$ singular
values and can be accomplished in $\mathcal{O}(k)$ operations, so we can
conclude that there is a constant $C$ such that not more than $C k^3$
operations are required per cluster $t\in\ctI$ and direction
$c\in\mathcal{D}_t$.

The total number of operations is bounded by
\begin{equation*}
  \sum_{t\in\ctI} \sum_{c\in\mathcal{D}_t} C k^3
  = C k^3 \sum_{t\in\ctI} \#\mathcal{D}_t
  \leq C \Ccs k^3 (\#\ctI + \Clv (\pI+1) \kappa^2)
\end{equation*}
due to (\ref{eq:cluster_sum}), and (\ref{eq:clusters_bound})
completes the proof.
\end{proof}

The only thing left is the calculation of the new coupling matrices, but
this is a simple matrix multiplication of the old coupling matrices
 with the basis-change matrices (\ref{eq:C_tc}).

%
%
\begin{lemma}[Projections]
There is a constant $\Cpr\in\bbbr_{>0}$ such that computing the
new coupling matrices $(\widetilde{S}_{b})_{b\in\lfaII}$ requires
not more than
\begin{equation*}
  \Cpr k^3 \left( \frac{\#\Idx}{k} + (\pI+1) \kappa^2 \right)
  \text{ operations.}
\end{equation*}
\end{lemma}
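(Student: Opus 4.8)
The plan is to follow the same pattern established by the previous three lemmas: bound the work per block $b=(t,s,c)\in\lfaII$ by $\mathcal{O}(k^3)$, then sum over all admissible blocks using the block-sum estimate (\ref{eq:block_sum}) and the cluster bound (\ref{eq:clusters_bound}). First I would recall that the new coupling matrix for an admissible leaf $b=(t,s,c)$ is obtained by the orthogonal projection of the old coupling matrix into the new bases, i.e. $\widetilde{S}_b = C_{tc} S_b C_{sc}^*$, where $C_{tc}=Q_{tc}^* V_{tc}$ and $C_{sc}=W_{sc}^* \cdots$ — more precisely, using the change-of-basis matrices (\ref{eq:C_tc}) for the row basis and the analogous matrices for the column basis (computed by applying the whole procedure to $G^*$). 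This follows from $G|_{\hat t\times\hat s}=V_{tc}S_bW_{sc}^*$ and the definition of the recompressed matrix as $Q_{tc}Q_{tc}^* G|_{\hat t\times\hat s} \tilde W_{sc}\tilde W_{sc}^*$, so that the new coupling matrix is $Q_{tc}^* G|_{\hat t\times\hat s}\tilde W_{sc} = (Q_{tc}^*V_{tc}) S_b (W_{sc}^*\tilde W_{sc})$.

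Next I would count operations: each of $C_{tc}$ and the column counterpart has at most $k$ rows and $k$ columns (the new ranks $k_{tc},k_{sc}$ are bounded by $k$ since we start from a rank-$k$ interpolation basis and the original coupling matrices $S_b$ are $k\times k$). Hence forming $C_{tc}S_b$ costs at most $2k^3$ operations and multiplying by $C_{sc}^*$ costs at most another $2k^3$ operations, so each block requires not more than $4k^3$ operations. (The change-of-basis matrices themselves were already computed and accounted for in Lemma~\ref{lem:comptrun}, so they are available for free here.)

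Then I would sum over admissible leaves. Since every $b\in\lfaII$ is of the form $(t,s,c)$ with $s\in\brow(t,c)$, we have
\begin{equation*}
  \#\lfaII \leq \sum_{t\in\ctI}\sum_{c\in\mathcal{D}_t} \#\brow(t,c)
  \leq \Cbs\left(\#\ctI + \Clv(\pI+1)\kappa^2\right)
\end{equation*}
by (\ref{eq:block_sum}), and therefore the total work is bounded by
\begin{equation*}
  \sum_{b\in\lfaII} 4k^3 \leq 4\Cbs k^3\left(\#\ctI + \Clv(\pI+1)\kappa^2\right)
  \leq 4\Cbs\Clv k^3\left(\frac{\#\Idx}{k} + (\pI+1)\kappa^2\right),
\end{equation*}
where the last step uses (\ref{eq:clusters_bound}). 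Setting $\Cpr := 4\Cbs\Clv$ finishes the proof.

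The main obstacle — really the only thing requiring care — is justifying that the new ranks $k_{tc}$ (and the column analogues) are bounded by $k$, so that all intermediate matrices stay $k\times k$ and the per-block cost is genuinely $\mathcal{O}(k^3)$; this follows because the recompressed basis is built from the left singular vectors of matrices with only $k$ columns, hence $k_{tc}\leq k$. Everything else is a routine matrix-multiplication count plus an application of Lemma~\ref{le:sums}.
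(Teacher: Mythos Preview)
Your proposal is correct and essentially identical to the paper's proof: the paper also bounds the per-block cost of computing $T_b=C_{tc}S_b$ and $\widetilde S_b=T_bC_{sc}^*$ by $4k^3$, then applies (\ref{eq:block_sum}) and (\ref{eq:clusters_bound}) to arrive at the same constant $\Cpr=4\Cbs\Clv$. Your added remark that $k_{tc}\leq k$ (because the new basis consists of left singular vectors of a matrix with $k$ columns) makes explicit a point the paper leaves implicit.
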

\begin{proof}
Computing the products $T_b := C_{tc} S_b$ and $\widetilde{S}_b
:= T_b C_{sc}^*$ requires not more than $4 k^3$ operations for
each block $b\in\lfaII$.
Due to (\ref{eq:block_sum}), the total number of operations
is bounded by
\begin{equation*}
  \sum_{b\in\lfaII} 4 k^3
  \leq 4 k^3 \sum_{t\in\ctI} \sum_{c\in\mathcal{D}_t} \#\brow(t,c)
  \leq 4 \Cbs k^3 (\#\ctI + \Clv (\pI+1) \kappa^2),
\end{equation*}
and we can use (\ref{eq:clusters_bound}) to obtain our estimate
with $\Cpr := 4 \Cbs \Clv$.
\end{proof}

For the complete recompression, we have to compute the basis
and total weights for the row and the column cluster basis,
we have to truncate both bases, and we have to apply the
projection to obtain the improved $\mathcal{DH}^2$-matrix
representation.

%
%
\begin{theorem}[Complexity]
\label{th:complexity}
Let a $\mathcal{DH}^{2}$-matrix be given.
The entire recompression algorithm requires not more than
\begin{equation*}
  (2\Cbw + 2\Cwe + 2\Ctr + \Cpr) k^3
  \left( \frac{\# \Idx}{k} + (\pI+1) \kappa^{2} \right)
  \text{ operations.}
\end{equation*}
\end{theorem}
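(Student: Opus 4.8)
The plan is to combine the four preceding lemmas additively. Each of the four algorithmic phases has a complexity bound of the form $C k^3 (\#\Idx/k + (\pI+1)\kappa^2)$ with a phase-specific constant: basis weights cost $\Cbw k^3 (\#\Idx/k + (\pI+1)\kappa^2)$, total weights cost $\Cwe k^3(\dots)$, the truncation cost $\Ctr k^3(\dots)$, and the computation of the new coupling matrices costs $\Cpr k^3(\dots)$. The only non-trivial observation is bookkeeping: the recompression produces \emph{both} an improved row cluster basis and an improved column cluster basis, and obtaining the column basis amounts to running exactly the same algorithm on the adjoint matrix $G^*$, which has the same cluster tree, the same directions, and the same structural constants. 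Hence the basis-weight phase, the total-weight phase, and the truncation phase each run twice — once for $(W_{tc})$ / the row basis and once for the analogous quantities attached to $G^*$ — while the final projection step producing $(\widetilde S_b)_{b\in\lfaII}$ is performed only once, since it already consumes both basis-change families $C_{tc}$ and $C_{sc}$.

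First I would state that the algorithm decomposes into exactly these phases and cite the four lemmas for their individual bounds. Second, I would point out that by the symmetry argument of Section~\ref{se:compression_general} (the error splitting that lets us treat row and column bases independently), the basis-weights, total-weights and truncation lemmas apply verbatim to the adjoint problem with the same constants, so each contributes a factor of $2$. Third, I would simply add the five contributions:
\begin{equation*}
  2\Cbw k^3 \left(\tfrac{\#\Idx}{k} + (\pI+1)\kappa^2\right)
  + 2\Cwe k^3 (\dots) + 2\Ctr k^3 (\dots) + \Cpr k^3 (\dots),
\end{equation*}
and factor out the common term $k^3 (\#\Idx/k + (\pI+1)\kappa^2)$ to obtain the stated constant $2\Cbw + 2\Cwe + 2\Ctr + \Cpr$.

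There is essentially no obstacle here: the theorem is a summation of results already proven, and the entire content is the accounting of which phases run once and which run twice. The one point requiring a sentence of justification is that running the row-basis algorithm on $G^*$ genuinely reproduces the hypotheses of the three doubled lemmas — in particular that $G^*$ is itself a $\mathcal{DH}^2$-matrix with row basis $(W_{tc})$ and column basis $(V_{tc})$, so that all the geometric and sparsity constants $\Csp$, $\Clv$, $\Cdi$, etc., are unchanged — which is immediate from Definition~\ref{de:dh2matrix}. After that, factoring the common term and reading off the constant completes the proof.
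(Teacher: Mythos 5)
Your proposal is correct and matches the paper's argument exactly: the paper likewise sums the four lemmas, with the factors of $2$ accounted for by running the basis-weight, total-weight, and truncation phases once for the row basis and once for the column basis (via the adjoint), and the projection phase once. Your additional remark justifying that the adjoint problem satisfies the same hypotheses with the same constants is a sensible elaboration of what the paper leaves implicit.
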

\begin{proof}
The proof follows by simply adding the estimates provided by
the previous lemmas.
\end{proof}

%
%
\begin{remark}[Complexity]
\label{re:complexity}
Let $n := \#\Idx$ denote the matrix dimension.
Since $\Crs$ should be bounded independently of $n$, we have
to expect $\pI\sim\log n$.

If the wave number $\kappa$ is constant, the first term in the
estimate of Theorem~\ref{th:complexity} is dominant and the recompression
algorithm requires $\mathcal{O}(n k^2)$ operations.

In the high-frequency case, we have $\kappa^2 \sim n$, the second
term is dominant, and the recompression algorithm requires
$\mathcal{O}(n k^3 \log n)$ operations.
\end{remark}

%
%

\section{Numerical experiments}

As an example we use the three-dimensional unit sphere
and cube.
For the cube, we simply represent each face by two triangles that are
then regularly refined.
For the sphere, we start with the double pyramid
$P=\{x\in\bbbr^3\ :\ |x_1|+|x_2|+|x_3|=1\}$, refine every one of its
eight faces regularly, and shift the resulting vertices to the unit sphere.
For constructing the Galerkin stiffness matrix
$G \in \bbbc^{ \Idx \times  \Idx}$ we use piecewise constant basis functions
and Sauter-Erichsen-Schwab quadrature of order $n_q = 5$
\cite{SASC11, ERSA98}
for triangles that share a vertex, an edge, or are identical, and
otherwise Gau\ss{} quadrature with Duffy transformation of order
$n_q = 3$ \cite{DU82}.

As clustering strategy the standard binary space partitioning is applied
until clusters contain not more than $32$ elements.
We used $\eta_1 = 10$ for creating the directions (\ref{eq:adm_directions}),
and $\eta_2=1$ for the standard (\ref{eq:adm_standard}) and parabolic
admissibility condition (\ref{eq:adm_parabolic}).
The initial $\mathcal{DH}^2$-matrix approximation is constructed by
directional interpolation of order $m=4$, and the initial rank is
$k=4^3=64$.

We choose the wave number in such a way, that $\kappa h \approx 0.6$
is ensured, i.e., we have approximately ten elements per wavelength.
For the recompression algorithm we employ an accuracy $\epsilon = 10^{-4}$
for the block-wise relative Frobenius norm.

We have parallelized the algorithm using the OpenMP standard
in order to take advantage of multicore shared-memory systems:
since the basis weights (\ref{eq:basis_weights_def}) are computed
by a bottom-up recursion, all weights within one level of the
tree can be computed in parallel.
Since the total weights (\ref{eq:recomp_weight_full}) are computed
by a top-down recursion, we can also perform all operations within
the same level in parallel.
Finally, the construction of the adaptive cluster bases is again
a bottom-up procedure and therefore accessible to the same
approach.
On a server with two Intel\textsuperscript{\textregistered}
Xeon\textsuperscript{\textregistered} Platinum 8160 processors
with a total of 48 cores, direct interpolation for
$131\,072$ triangles takes $203$ seconds with the parallel
implementation and $5\,543$ seconds without, a speedup of $27$,
while recompression takes $1\,475$ seconds with the parallel
version and $38\,676$ seconds without, a speedup of $26$.
We have observed that the speedup improves as the problem
size grows.

Table \ref{tab:slp_frob} shows our results for the single layer potential
on the unit sphere.
The first column gives the number $n$ of degrees of freedom, the second
the wave number $\kappa$.
The third and fourth column show the storage per degree of
freedom in KB (1 KB is 1\,024 bytes) of the original cluster basis and
the original $\mathcal{DH}^2$-matrix, the fifth, sixth and sevenths
colums give the rank, storage for the basis, and storage for the 
matrix after the recompression.
The last two columns give the absolute and relative errors
between the $\mathcal{DH}^{2}$-matrix and its recompressed version, measured
in the Frobenius norm.

%
%

\begin{table}[ht]
\begin{equation*}
\begin{array}{r|r|r|r|r|r|r|r|r} \label{tab:slp_frob}
& & \multicolumn{2}{c|}{\text{original}}
  & \multicolumn{3}{c|}{\text{recomp.}} & \\
n & \kappa & \text{basis}& \text{matrix} & \text{rank} & \text{basis} 
  & \text{matrix} & \text{error} & \text{rel. error}\\
\hline
2\,352 & 3.5 & 0.6 & 51.6 & 15 & 0.1 & 36.1 & 4.93_{-8} & 2.11_{-6}\\
\hline
4\,800 & 5 & 2.0 & 166.4 & 20 & 0.3 & 66.6 & 3.69_{-8} & 3.12_{-6}\\
\hline
9\,408 & 7 & 3.2 & 326.5 & 23 & 0.4 & 116.5 & 1.70_{-8} & 2.74_{-6}\\
\hline
19\,200 & 10 & 6.4 & 496.7 & 28 & 0.6 & 167.9 & 1.06_{-8} & 3.39_{-6}\\
\hline
37\,632 & 14 & 13.8 & 941.3 & 30 & 1.9 & 274.8 & 6.57_{-9} & 4.03_{-6}\\
\hline
76\,800 & 20 & 25.2 & 1\,516.5 & 33 & 1.4 & 374.0 & 3.80_{-9} & 4.64_{-6}\\
\hline
150\,528 & 28 & 35.3 & 2\,183.7 & 33 & 2.0 & 484.5 & 2.13_{-9} & 4.98_{-6}\\
\hline 
307\,200 & 40 & 57.8 & 2\,730.7 & 36 & 3.1 & 607.1 & 1.28_{-9} & 5.99_{-6}
\end{array}
\end{equation*}
\caption{Single layer potential operator on the cube (Frobenius norm)}
\end{table}

Similar results are obtained for the double layer potential, they are
presented with the same structure as above in Table~\ref{tab:dlp_frob}.


\begin{table}[ht]
\begin{equation*}
\begin{array}{r|r|r|r|r|r|r|r|r} \label{tab:dlp_frob}
& & \multicolumn{2}{c|}{\text{original}}
  & \multicolumn{3}{c|}{\text{recomp.}} & \\
n & \kappa & \text{basis} & \text{matrix} & \text{rank} & \text{basis}
  & \text{matrix} & \text{error} & \text{rel. error}\\
\hline
2\,352 & 3.5 & 0.6 & 51.6 & 18 & 0.1 & 36.3 & 1.21_{-7} & 2.45_{-7}\\
\hline
4\,800 & 5 & 2.0 & 166.4 & 22 & 0.3 & 67.5 & 1.30_{-7} & 3.75_{-7}\\
\hline
9\,408 & 7 & 3.2 & 326.5 & 26 & 0.4 & 118.3 & 8.08_{-8} & 3.26_{-7}\\
\hline
19\,200 & 10 & 6.4 & 496.7 & 30 & 0.8 & 176.5 & 1.97_{-7} & 1.13_{-6}\\
\hline
37\,632 & 14 & 13.8 & 941.3 & 33 & 1.5 & 294.8 & 2.19_{-7} & 1.76_{-6}\\
\hline
76\,800 & 20 & 25.2 & 1\,516.5 & 37 & 2.4 & 409.5 & 1.88_{-7} & 2.16_{-6}\\
\hline
150\,528 & 28 & 35.3 & 2\,183.7 & 37 & 3.6 & 541.4 & 1.46_{-7} & 2.36_{-6}\\
\hline 
307\,200 & 40 & 57.8 & 2\,730.7 & 43 & 4.4 & 647.9 & 6.33_{-8} & 1.46_{-6}
\end{array}
\end{equation*}
\caption{Double layer potential operator on the cube (Frobenius norm)}
\end{table}

\begin{figure}[ht]
\begin{center}\label{fig:DLP_setup}
\subfigure[Memory (dlp)]{\includegraphics[scale = 0.45]{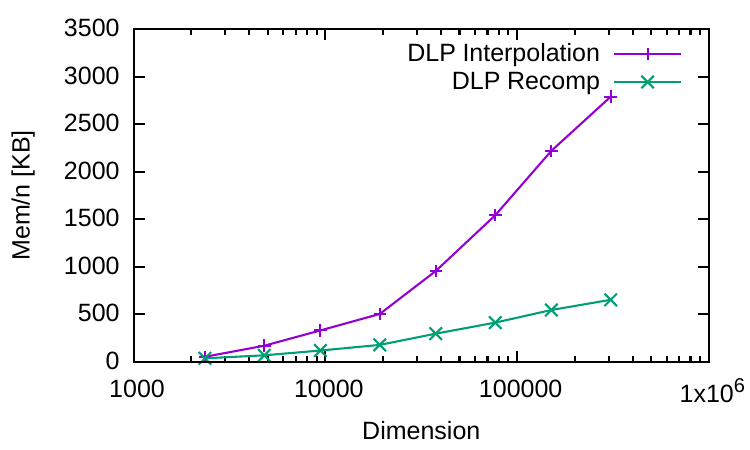}} \quad
\subfigure[Complexity (dlp)]{\includegraphics[scale = 0.45]{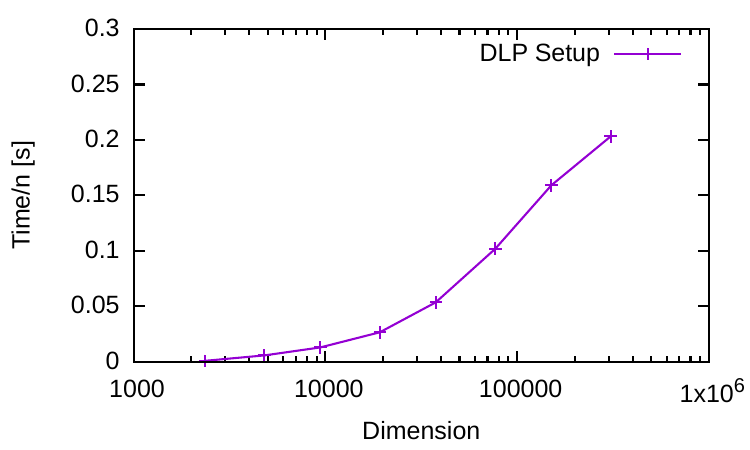}}

\subfigure[Memory (slp)]{\includegraphics[scale = 0.45]{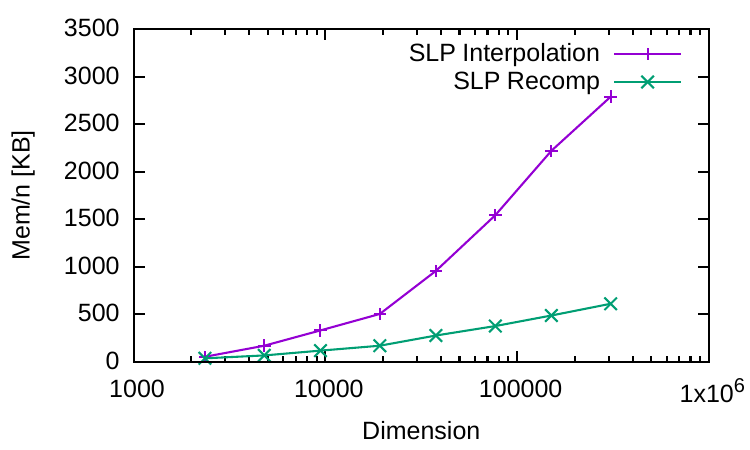} } \quad
\subfigure[Complexity (slp)]{\includegraphics[scale = 0.45]{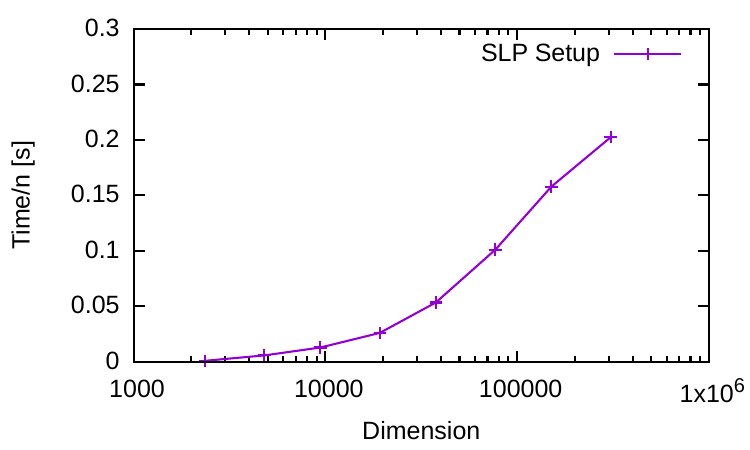} }
\caption{Memory and time per degree of freedom for the cube}
\end{center}
\end{figure}

To outline the results, Figure~\ref{fig:DLP_setup} shows the memory
requirements per degree of freedom as function of $n$ for the single
layer (c) and the double layer potential (a).
In both cases from the beginning of our experiment the recompressed
version needs less storage and the memory advantage improves with $n$.

The panels (b) and (d) in Figure~\ref{fig:DLP_setup} show the runtime
per degree of freedom for the recompression of the $\mathcal{DH}^2$-matrix
obtained by interpolation.
Since we use a logarithmic scale for $n$ and a linear scale for the
time divided by $n$, the experiment confirms our expectation of
$\mathcal{O}(n \log n)$ complexity.


Even for higher wave numbers and other norms the algorithm keeps
this behavior:
Table \ref{tab:slp_eucl} shows results for doubled wave numbers on
the unit sphere, where the error control strategy for the recompression
uses the spectral norm instead of the Frobenius norm.

%
\begin{table}[ht]
\begin{equation*}
\begin{array}{r|r|r|r|r|r|r|r|r} \label{tab:slp_eucl}
& & \multicolumn{2}{c|}{\text{original}}
  & \multicolumn{3}{c|}{\text{recomp.}} & \\
n & \kappa & \text{basis} & \text{matrix} & \text{rank} & \text{basis}
  & \text{matrix} & \text{error} & \text{rel. error}\\
\hline
2\,048 & 8 & 0.3 & 35.2 & 8 & 0.0 & 32.3	& 3.94_{-9\phantom{0}} & 2.74_{-6}\\
\hline
4\,608 & 12 & 1.3 & 82.9 & 9 & 0.1 & 71.3 & 3.22_{-9\phantom{0}} & 6.65_{-6}\\
\hline
8\,192 & 16 & 4.9 & 205.5 & 12 & 0.2 & 120.6 & 1.63_{-9\phantom{0}} & 7.37_{-6}\\
\hline
18\,432 & 24 & 10.1 & 821.9 & 15 & 0.6 & 223.0 & 1.26_{-9\phantom{0}} & 1.66_{-5}\\
\hline
32\,768 & 32 & 45.7 & 1\,450.8 & 15 & 1.7 & 312.3 & 6.15_{-10} & 1.74_{-5}\\
\hline
73\,728 & 48 & 72.7 & 3\,336.2 & 18 & 2.8 & 459.3 & 2.62_{-10} & 2.16_{-5}\\
\hline
131\,072 & 64 & 98.4 & 5\,114.2 & 21 & 3.9 & 586.7 & 1.49_{-10} & 2.64_{-5} \\
\end{array}
\end{equation*}
\caption{Single layer potential operator on the sphere (spectral norm)}
\end{table}

Next we consider a more realistic geometry:
a mesh of an airplane, more precisely a Boeing 747, comprised
of $549\,836$ triangles and $274\,920$ vertices, provided by courtesy
of Boris Dilba.
We use the wave number $\kappa = 3.15$, corresponding to
a wavelength of approximately $2$.
The maximal extent of the airplane is approximately $62$, i.e.,
approximately $31$ wavelengths.
A picture of our object of study is shown in
Figure~\ref{fig:boeing}.

\begin{figure}[ht]
\begin{center}
\includegraphics[scale=0.25]{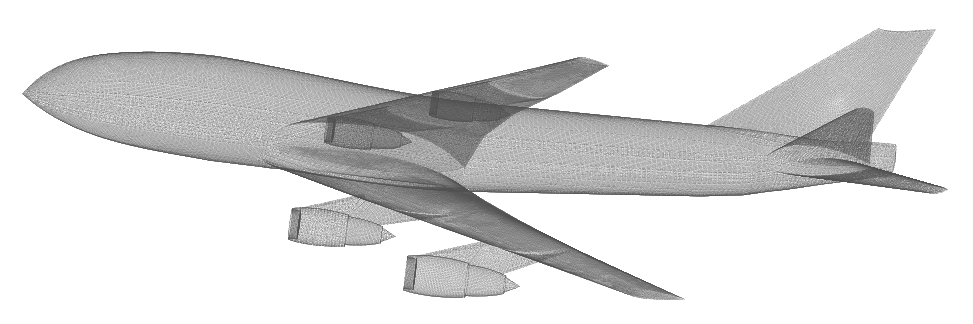}
\end{center}
\caption{Mesh of a Boeing 747.}\label{fig:boeing}
\end{figure}

We have modified our recompression algorithm such that
it can be applied during the set-up process to reduce
intermediate storage requirements:
the cluster basis is orthogonalized immediately, and the coupling matrices
are constructed on the fly when needed during the recompression
algorithm.
With the modified algorithm we are able to set up the
$\mathcal{DH}^{2}$-matrix with linear basis functions for the
airplane mesh to obtain the results given in
Table~\ref{tab:boeing}, where we have varied both the recompression
tolerance $\epsilon$ and the interpolation order $m$.
We also report run-times (in hours) measured on our shared-memory system.


\begin{table}[h]
\begin{equation*}
\begin{array}{r|r|r|r|r|r|r|r} \label{tab:boeing}
\epsilon & m & \text{time} & \text{rank} & \text{basis} & \text{matrix}
  & \text{error} & \text{rel. error}\\
\hline
1.0_{-2} & 4 & 0.5 & 28 & 1.7 & 83.6 & 1.0_{-3} & 4.9_{-2}\\
\hline
1.0_{-3} & 5 & 1.3 & 38 & 2.9 & 105.9 & 1.8_{-4} & 8.4_{-4}\\
\hline
1.0_{-4} & 6 & 3.2 & 46 & 4.7 & 138.4 & 3.1_{-5} & 1.5_{-4}\\
\end{array}
\end{equation*}
\caption{Boeing 747 with single layer potential (direct recompression)}
\end{table}

We have measured the relative error between the dense,
i.e., uncompressed, matrix and the recompressed approximation in the
Frobenius norm.
To put these results in perspective, the dense matrix takes about $4\,296$~KB
per degree of freedom.

Compared to interpolation, our recompression algorithm reduces the
storage requirements for the cluster basis from $194$~KB to $1.7$~KB for
order $m=4$ and from $2\,137$~KB to $4.7$~KB for order $m=6$.
For the coupling matrices, we save similar amounts of storage:
we go from $2\,322$~KB to $83.6$~KB for order $m=4$ and from
$25\,833$~KB to $138.4$~KB for order $m=6$.
The measured relative Frobenius error is always well below the
prescribed tolerance.

We can see that $\mathcal{DH}^2$-recompression is absolutely crucial
in order to turn the initial approximation constructed by directional
interpolation into a practically useful representation that saves
approximately $96\%$ of storage at an accuracy of $3.1\times 10^{-5}$.

%
%

\bibliographystyle{plain}
\bibliography{scicomp}

\end{document}